\theoremstyle{plain}
\newtheorem{theorem}{Theorem}
\newtheorem{corollary}{Corollary}
\newtheorem{proposition}{Proposition}
\newtheorem{lemma}{Lemma}
\theoremstyle{definition}
\newtheorem{definition}{Definition}
\theoremstyle{remark}
\newtheorem{remark}{Remark}
\newtheorem{example}{Example}
\newtheorem*{notation}{Notation}
\newcommand{\wh}{\widehat}
\newcommand{\ol}[1]{\overline{#1}}
\def\bbC{\mathbb C}
\def\bbN{\mathbb N}
\def\bbT{\mathbb T}
\def\bbZ{\mathbb Z}
\newcommand{\fA}{\mathfrak A}     \newcommand{\sA}{\mathcal A}
     \newcommand{\sB}{\mathcal B}
     \newcommand{\sD}{\mathcal D}
     \newcommand{\sG}{\mathcal G}
\newcommand{\fM}{\mathfrak M}     \newcommand{\sM}{\mathcal M}
\newcommand{\fN}{\mathfrak N}     \newcommand{\sN}{\mathcal N}
     \newcommand{\sO}{\mathcal O}
     \newcommand{\sS}{\mathcal S}
\newcommand{\al}{\alpha}
\newcommand{\be}{\beta}
\newcommand{\vpi}{\varphi}
\newcommand{\si}{\sigma}
\newcommand{\Si}{\Sigma}
\newcommand{\om}{\omega}
\begin{document}

\title[Dirichlet Operator Algebras]
{Representations of Dirichlet Operator Algebras}

\author{Justin R. Peters}
\address{Department of Mathematics\\
    Iowa State University, Ames, Iowa, USA}

\email{peters@iastate.edu}

 \keywords{Dirichlet operator algebra, C$^*$-envelope, Cartan pair, essentailly principal etale groupoid.}
\subjclass[2010]{Primary 47L30, 22A22; Secondary 46L55, 46L30}
 \begin{abstract}
A Dirichlet operator algebra is a nonself-adjoint operator algebra $\sA$ with the property that $\sA + \sA^*$ is norm-dense in the C$^*$-envelope of $\sA.$
We show that, under certain restrictions, $\sA$ has a family of completely contractive representations $\{\pi_i\}$ with the property that the invariant subspaces of $\pi_i(\sA)$ are totally
ordered, and such that, for all $a \in \sA, \ ||a|| = \sup_i ||\pi_i(a)||.$ The class of Dirichlet algebras includes strongly maximal triangular AF algebras, certain
semicrossed product algebras, and gauge-invariant subalgebras of Cuntz C$^*$-algebras.

The main tool is the duality theory for essentially principal etale groupoids.
 \end{abstract}
\maketitle

\section{Introduction}
One of the earliest results in the theory of C$^*$-algebras was that, given a C$^*$-algebra $\fA,$ there are sufficiently many irreducible representations to separate the points.
Equivalently, for $ a \in \fA, \ ||a|| = \sup_{\pi \text{ irreducible}} ||\pi(a)||.$
However, for the larger class of operator algebras, which need not be closed under the adjoint operation, a coordinate-free description
became available only in 1990 (\cite{BRS90}), and hence the question how such abstract algebras of operators can be representated in Hilbert space has become meaningful.
Now the analogue of the C$^*$-algebra result, that there are sufficiently many irreducible representations, does not hold in the larger context of operator algebras, 
as one can see even from finite-dimensional upper triangular matrix algebras.

A number of authors have considered nest representations;  a nest represenations of an operator algebra $\sA$ is a bounded representation $\pi: \sA \to \sB(H)$ 
 for which the lattice of invariant subspaces is linearly ordered.  One can ask, given an operator algebra $\sA,$ if there are sufficiently many nest representations to 
separate the elements of $\sA,$ or to achieve the norm. It seems that the tools do no exist at present to approach the problem in this generality.

Much of the previous work on nest representations of operator algebras has dealt with the class of TAF operator algebras
(i.e., triangular subalgebras of AF C$^*$-algebras); In \cite{DonHopEtAl00}, Theorem 2.4, it is shown that
the kernel of a nest representation is a meet-irreducible ideal.  The meet-irreducible ideals are ideals $I$ with the property that if $I = J \cap K,$ then $I = J$ or $I = K.$
In \cite{DavKatPet03} the converse is proved, namely that the kernel of a bounded nest representation is a meet-irreducible ideal. Some results in the literature have 
topologized the set of meet-irreducible ideals, making an analogy with the primite ideals in a C$^*$-algebra.  In \cite{Lam97} it is shown that for certain TAF algebras, 
every closed ideal is the intersection of meet irreducible ideals. Applying this result
to the $(0)$ ideal, we can conclude that for these TAF algebras there are sufficiently many nest representations to separate points of the algebra.
However, even for this class of TAF algebras, the relation between the norm in the algebra, and the norms of the (bounded) nest represenations is not clear.

These and related ideas have been investigated by various authors.
(cf \cite{DonPitPow2001}, \cite{Lam96}, \cite{Lam97}, \cite{DavKat02}, \cite{HopPetPow00})  

Thus, progress on the question of nest representations hinged on the study of the meet-irreducible ideals. In the current work,
 rather than examing the \emph{internal} structure of operator algebras,
our approach is to look at what might be called the \emph{external} structure: this consists of a C$^*$-cover $\fA$ of the operator algebra $\sA,$ along with
an abelian subalgebra $\fM \subset \sA$ such that the pair $(\fA, \fM)$ forms a Cartan pair in the sense of \cite{JRen08}. Indeed, at the time of these earlier investigations,
the Renault-Kumjian duality theory of essentially principal etale groupoids had not yet been fully developed.  

Here we consider Dirichlet algebras. These are (nonself-adjoint) unital operator algebras $\sA$ with the property that, if we view $\sA$ (and also $\sA^*$) as embedded in a
C$^*$-cover $\fA,$ then $\sA + \sA^*$ is norm-dense in $\fA.$ We also assume that the masa $\fM$ is contained in the Dirichlet algebra $\sA.$

This context of Dirichlet algebras which contain a masa which is a Cartan masa in a C$^*$-cover encompasses, for example, the class of strongly maximal TAF-algebras.
However, it also includes certain semicrossed product algebras,  some subalgebras of Cuntz C$^*$-algebras, and more generally various subalgebras of graph C$^*$-algebras.

The term ``Dirichlet algebra'' is used differently in various contexts. Classicly, a commutative Banach algebra $\sA$ is Dirichlet if it is a subalgebra of $C(X)$
for some compact Hausdorff space $X$ and $\sA + \sA^*$ is dense in $C(X),$ or equivalently, if the real parts of the functions in $\sA$ are dense in the real parts of the
functions in $C(X).$  In our setting, the Dirichlet algebras will be noncommutative.  Thus, $C(X)$ is replaced by a noncommutative C$^*$-algebra, a C$^*$-cover.
While we require that $\sA \cap \sA^*$ contain an abelian subalgebra which is a masa $\fM$ in a C$^*$-cover of $\sA,$ we
do not require the stricter condition that $\sA \cap \sA^* = \fM.$

By \cite{JRen08} there is a unique faithful conditional expectation $E: \fA \to \fM.$ Now some authors (e.g. Arveson, \cite{Arv.Not11}) require
the conditional expectation to be multiplicative when restricted to $\sA;$ this is not required in our context, though it is satisfied in some of the motivating examples.

In \cite{DavKat11} the authors defined a unital operator algebra $\sA$ to be \emph{semi-Dirichlet} if $\sA^*\sA \subset \ol{\sA + \sA^*}$ (where $\sA,\ \sA^*$ are viewed
as subsets of C$^*_{env}(\sA)$).  A natural question is whether the results here can be extended to the semi-Dirichlet case, though clearly the tools employed here
would not be applicable, as there need not be a Cartan masa.

In section~\ref{s:BandN} we introduce some of the groupoid notation and briefly refer to the main results of Kumjian and Renault. In section~\ref{s:reps of Cartan pairs}, we first prove
that if $(G, \Si) $ is a twisted etale Hausdorff locally compact second countable essentially principal groupoid, and $x_0 \in X, $ the unit space, that the state of $C(X),\ f \mapsto f(x_0)$
extends uniquely to a state of the groupoid C$^*$-algebra if and only if $x_0$ has trivial isotropy. (Corollary~\ref{c:unique extension}) The corresponding GNS irreducible
representation $\pi_0$ maps the Cartan pair $(\fA, \fM)$ (where $\fA$ is the reduced C$^*$-algebra of the twised etale groupoid, and $\fM$ is the Cartan masa) 
to a Cartan pair $(\pi_0(\fA), \pi_0(\fM)).$ (Corollary~\ref{c:Cartan}) Furthermore, $\pi_0(\fM)$ is weakly dense in a masa in $\sB(H_{\pi_0}).$ (Corollary~\ref{c:fM is a masa})
Proposition~\ref{p:dirichlet} implies that the restriction of $\pi_0$ to the Dirichlet algebra $\sA \subset \fA$ has the property that its invariant subspaces are totally ordered. The
main result, that there are sufficiently many such representations to achive the norm, is Corollary~\ref{c:sufficiently many nest}

 In section~\ref{s:examples} we describe some examples of Dirichlet algebras: strongly maximal TAF algebras, certain semicrossed product algebras, Dirichlet subalgebras of
Cuntz algebras $\sO_n,$ and more generally ``gauge invariant'' subalgebras of certain graph C$^*$-algebras.

\vspace{.25cm}
\section{Background and Notation} \label{s:BandN}

\begin{definition} \label{d:Cartan}
A pair $(\fA, \fM)$ consisting of a separable C$^*$-algebra $\fA$ and an abelian subalgebra $\fM$ is called a \emph{Cartan pair} if
\begin{enumerate}
\item $\fM$ contains an approximate unit of $\fA;$
\item $\fM$ is maximal abelian C$^*$-subalebra of $\fA;$
\item $\fM$ is regular;
\item there exists a faithful conditional expectation $E: \fA \to \fM.$ 
\end{enumerate}
\end{definition}

In the setting here, the C$^*$-algebras $\fA$ will be unital, so the first condition is vacuous. 
To explain the regularity condition we need to introduce the concept of normalizers. 
\begin{definition} \label{d:normalizer}
An element $0 \neq n \in \fA$ is said to \emph{normalize} $\fM$ if 
\[ n^* \fM n \subset \fM \text{ and } n \fM n^* \subset \fM .\]
\end{definition}
We will denote the set of normalizers by $\sN.$  Trivially, every $h \in \fM$ is a normalizer. There are masas in C$^*$-algebras whose only normalizers 
belong to the masa.  Such masas are called singular.  We are interested in the setting in which the set of normalizers is large.

\begin{definition} \label{d:regular}
We say that the masa $\fM \subset \fA$ is \emph{regular} if the set $\sN$ of normalizers generate $\fA$ as a C$^*$-algebra.
\end{definition}

We will make use of the results of \cite{JRen08}. In particular, if $(\fA, \fM)$ is a Cartan pair, there is an etale, Hausdorff, locally compact, second countable and
essentially principal groupoid $G,$ and a twist $\Si$ over $G,$ such that the elements of $\fA$ can be viewed as sections of the line bundle on $(G, \Si),$ 
or equivalently as functions $f: \Si \to \bbC,$ satisfying $f(z\si) = \bar{z}f(\si)$ for $\si \in \Si,\ z \in \bbT.$
Furthermore, the product of elements in $\fA$ can be viewed as convolution on the twised groupoid.

The elements of the groupoid $G$ can be expressed as triples $[x, g, y]$ where $g$ is a partial homeomorphism of the space $X = \wh{\fM},$ 
the Gelfand space of the masa $\fM \subset \fA,$ where $g$ belongs to the Weyl pseudo-group $\sG$. The domain of $g$ is open in $X, \ y \in \text{dom}(g),$
and $ x = g(y).$ If $g, h$ belong to the Weyl pseudo-group $\sG$ and if $y \in \text{dom}(g) \cap \text{dom}(h)$ with $g(y) = h(y),$ then
$[x, g, y] = [x, h, y].$ Thus the triple $[x, g, y]$ depends on the germ of the partial homeomorphism.

Consider the set 
\[\{(x, n, y) \in X \times \sN \times X : n^*n(y) > 0 \text{ and } x = \al_n(y) \}.\]
Define an equivalence relation on the set by $(x, n, y) \sim (x', n', y')$ if and only if $y' = y$ and there are functions $h, h' \in C(X) := \fM$ such that
$h(y),\ h'(y) > 0$ and $nh = n'h'.$

The elements of the twist $(G, \Si)$ are equivalence classes $[x, n, y].$   For each $n \in \sN$ there is a partial homeomorphism $\al_n$ of $X$
satisfying $n^*hn = h\circ \al_n \, n^*n, \ h \in C(X).$ The Weyl pseudo-group is precisely $\{ \al_n: n \in \sN\},$ where $\sN$ is the set of normalizers.

Now $\bbT = \{z \in \bbC: |z| = 1\}$ acts on $(G, \Si)$ by $z\cdot[x, n, y] = [x, zn, y].$ The range and source maps are local homeomorphism from $(G, \Si) \to G^{(0)},$
where $G^{(0)}$ is the unit space, given by $r[x, n, y] = x,\ s[x, n, y] = y.$

We will make repeated use of Theorem~5.6 of \cite{JRen08} without further comment; this is the "duality theorem" that every Cartan pair can be realized 
as a twisted groupoid C$^*$-algebra. There is an isomorphism  $ a \in \fA \mapsto \hat{a}$ where
\[ \hat{a}([y, n, x]) = \frac{E(n^*a)(x)}{\sqrt{n^*n(x)}} \]

\begin{notation} 
For simplicity of notation will will write $\hat{a}(x, n, y)$ in place of $\hat{a}([x, n, y]).$
\end{notation}

The elements of the C$^*$-algebra $\fA$ can now be viewed as a subalgebra of the
 continuous functions on $(G, \Si)$ vanishing at infinity, satisfying $ \hat{a}(x, zn, y) = \bar{z}\, \hat{a}(x, n, y),$ or more formally, sections of the line bundle over $G$.

The convolution is given by 
\[f*g(\si) = \sum_{s(\tau) = s(\si)} f(\si \tau^{-1}) g(\tau) \]
If we write $\si = [x, n, y],$ and $s^{-1}(y) = \{ [w_i, m_i, y]\},$ this can be expressed as

\[ \wh{f*g}(x, n, y) = \sum_i f(x, nm_i^*, w_i)g(w_i, m_i, y) \]
The involution is given by $f^*(x, n, y) = \ol{f(y, n^*, x)} .$

\begin{remark} \label{r:mult by C(X)}
If follows from the definition of convolution that if $f \in C(X),$ which is to say that $f$ is supported on the unit space $G^{(0)},$ and $a \in \fA$ is arbitrary that
$\wh{fa}(x, n, y) = f(x) \hat{a}(x, n, y)$ and $\wh{af}(x, n, y) = \hat{a}(x, n, y) f(y).$
\end{remark}

\begin{notation} \label{n:compact support}
Following \cite{JRen08}~Lemma 5.5, we will denote by $\sN_c $ the set of normalizers with compact support, and by
  $\fA_c$ its linear span.  By the Lemma, $\sN_c$ is dense in $\sN,$ and $\fA_c$ is dense in $\fA.$
\end{notation}
	
If $(\fA, \fM)$ is a Cartan pair, let $X $ be the Gelfand space of $\fM.$  A point $x \in X$ is said to have \emph{trivial isotropy} if 
 $[x, n, x] \in (G, \Si)$ implies $[x, n, x] \in G^{(0)},$ the unit space. If, on the other hand, there exists $n \in \sN$ with $E(n)(x) = 0$ and $[x, n, x] \in (G, \Si),$ then $x$ is said to have
non-trivial isotropy. (cf \cite{JRen08}~Lemma 5.2)

\begin{definition} \label{d:pseudoorbit}
Extending the terminology of \cite{JRen08}, if $\sG$ is the Weyl pseudogroup associated with the Cartan pair $(\fA, \fM),$ and $x_0 \in X := \hat{\fM}, $ we say that
\[ \sG(x_0) := \{g(x_0): g \in \sG, \ x_0 \in \text{dom}(g)\} \]
is the \emph{pseduoorbit} of $x_0.$
\end{definition}

\begin{lemma} \label{l:Ginvariant}
\begin{itemize}
\item[(i)] If $x_0 \in X,$ the pseudoorbit $\sG(x_0)$ is $\sG$-invariant.
\item[(ii)] The closure $\ol{\sG(x_0)}$ of the pseudoorbit $\sG(x_0)$ is $\sG$-invariant.
\item[(iii)] The complement $X\setminus \ol{\sG(x_0)}$ is $\sG$-invariant.
\end{itemize}
\end{lemma}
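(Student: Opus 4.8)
The plan is to deduce all three statements from two structural features of the Weyl pseudogroup $\sG$ --- that it is closed under composition and under inversion --- together with the topological fact that each $g \in \sG$ is a homeomorphism between \emph{open} subsets of $X$. Throughout I read ``$Y$ is $\sG$-invariant'' as $g(Y \cap \text{dom}(g)) \subset Y$ for every $g \in \sG$; since $\sG$ contains the inverse of each of its elements, this is equivalent to $g(Y \cap \text{dom}(g)) = Y \cap \range(g)$. For (i), I would start from a point $y = g(x_0) \in \sG(x_0)$, with $g \in \sG$ and $x_0 \in \text{dom}(g)$, and an arbitrary $h \in \sG$ with $y \in \text{dom}(h)$. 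The claim $h(y) \in \sG(x_0)$ reduces to $h \circ g \in \sG$ with $x_0 \in \text{dom}(h \circ g)$; the latter is immediate since $x_0 \in \text{dom}(g)$ and $g(x_0) = y \in \text{dom}(h)$. For the former I would use the explicit description $\sG = \{\al_n : n \in \sN\}$ and the defining relation $n^* h n = (h \circ \al_n)\, n^*n$: applying it successively to normalizers $m$ and $n$ shows that $mn \in \sN$ and that $\al_{mn} = \al_m \circ \al_n$ as germs, so writing $g = \al_n,\ h = \al_m$ gives $h \circ g = \al_{mn} \in \sG$ and hence $h(y) = (h\circ g)(x_0) \in \sG(x_0)$.

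For (ii), let $y \in \ol{\sG(x_0)}$ and $g \in \sG$ with $y \in \text{dom}(g)$. Since $\fM$ is separable, $X$ is second countable and metrizable, so I may choose a sequence $y_k \in \sG(x_0)$ with $y_k \to y$ (a net would serve in general). Because $\text{dom}(g)$ is open and contains $y$, eventually $y_k \in \text{dom}(g)$; continuity of the homeomorphism $g$ then gives $g(y_k) \to g(y)$, and part (i) guarantees $g(y_k) \in \sG(x_0)$. Hence $g(y) \in \ol{\sG(x_0)}$, which is what invariance of the closure requires.

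For (iii), I would argue by contradiction, exploiting closure of $\sG$ under inverses: the relation above gives $\al_n^{-1} = \al_{n^*}$ with $n^* \in \sN$, so $g^{-1} \in \sG$ and $\text{dom}(g^{-1}) = \range(g)$ whenever $g \in \sG$. If $y \in X \setminus \ol{\sG(x_0)}$ but $g(y) \in \ol{\sG(x_0)}$ for some $g \in \sG$ with $y \in \text{dom}(g)$, then $g(y) \in \range(g) = \text{dom}(g^{-1})$, and applying part (ii) to $g^{-1}$ yields $y = g^{-1}(g(y)) \in \ol{\sG(x_0)}$, contradicting the choice of $y$.

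The only genuinely nonroutine step is the bookkeeping inside (i): verifying from $n^* h n = (h \circ \al_n)\, n^*n$ that $\sN$ is closed under products and adjoints and that the assignment $n \mapsto \al_n$ carries multiplication to composition and the adjoint to inversion. Once these identities are recorded, (i) collapses to a one-line composition argument, and (ii) and (iii) are elementary consequences of the openness of the domains, the continuity of the partial homeomorphisms, and their invertibility within $\sG$.
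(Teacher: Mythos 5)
Your proof is correct and its core argument for (ii) --- pick $y_k \in \sG(x_0)$ with $y_k \to y$, use openness of $\mathrm{dom}(g)$ to get $y_k \in \mathrm{dom}(g)$ eventually, then continuity and part (i) --- is exactly the argument the paper gives, (ii) being the only part the paper proves explicitly. Your verifications of (i) (via $\al_{mn} = \al_m \circ \al_n$ from the defining relation $n^* h n = (h\circ\al_n)\,n^*n$) and of (iii) (via $\al_n^{-1} = \al_{n^*}$ and part (ii)) are sound fillings-in of the routine steps the paper leaves to the reader.
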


\begin{proof}
We only prove (ii). Let $y $ be in the closure of the pseudo-orbit $\sG(x_0),$ so there is a sequence $\{y_k\} \subset \sG(x_0)$ converging to $y.$
Now if $x = g(y),$ we claim $x \in \ol{\sG(x_0)}.$ Since $\text{dom}(g)$ is an open neighborhood of $y,$ it follows that there is an $K \in \bbN$ so that
$y_k \in \text{dom}(g)$ for $k \geq K.$ By (i), $g(y_n) \in \sG(x_0),$ and $x = g(y) = \lim_k g(y_k) \in \ol{\sG(x_0)}.$
\end{proof}

\begin{lemma} \label{l:n*n and nn*}
For any $n \in \sN,$ \ and $x \in \text{dom}(\al_n), \ nn^*(\al_n(x)) = n^*n(x).$
\end{lemma}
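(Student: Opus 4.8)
The plan is to exploit the single defining relation $n^{*}hn = (h\circ\al_n)\,n^{*}n$ for $h \in C(X) = \fM$, feeding into it the one function most likely to link $n^{*}n$ and $nn^{*}$, namely $h = nn^{*}$ itself. First I would observe that both $n^{*}n$ and $nn^{*}$ genuinely lie in $\fM$: since $\fA$ is unital with $1 \in \fM$, the normalizer conditions $n^{*}\fM n \subset \fM$ and $n\fM n^{*} \subset \fM$ give $n^{*}n = n^{*}\,1\,n \in \fM$ and $nn^{*} = n\,1\,n^{*} \in \fM$, so both may be regarded as continuous functions on $X$.

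Next I would substitute $h = nn^{*}$ into the defining relation. The left-hand side collapses by associativity,
\[ n^{*}(nn^{*})n = (n^{*}n)(n^{*}n) = (n^{*}n)^{2}, \]
while the right-hand side is $\big((nn^{*})\circ\al_n\big)\,n^{*}n$. Hence in $\fM \cong C(X)$ we obtain the identity
\[ (n^{*}n)^{2} = \big((nn^{*})\circ\al_n\big)\,(n^{*}n). \]
Reading this pointwise on $X$, for every $x$ we have $n^{*}n(x)^{2} = nn^{*}(\al_n(x))\,n^{*}n(x)$, with the understanding that the factor $(nn^{*})\circ\al_n$ is the continuous function supported where $n^{*}n$ does not vanish.

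Finally I would restrict to $x \in \text{dom}(\al_n) = \{x : n^{*}n(x) > 0\}$. There $n^{*}n(x) > 0$, so I may cancel one factor of $n^{*}n(x)$ from both sides to conclude $n^{*}n(x) = nn^{*}(\al_n(x))$, which is exactly the claim. The only delicate point is the interpretation of $(nn^{*})\circ\al_n$ as an element of $C(X)$, since $\al_n$ is only partially defined; but on the open set $\text{dom}(\al_n)$, where $n^{*}n$ is strictly positive and $\al_n$ is a genuine homeomorphism onto its range, the pointwise reading and the cancellation are unambiguous, so no serious obstacle arises.
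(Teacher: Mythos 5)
Your proposal is correct and takes essentially the same route as the paper: both substitute $h = nn^*$ into the defining relation $n^*hn = (h\circ\al_n)\,n^*n$ to obtain $nn^*(\al_n(x))\cdot n^*n(x) = (n^*n)^2(x),$ and then cancel using $\text{dom}(\al_n) = \{x : n^*n(x) > 0\}$ (Renault's Proposition 4.6). Your additional verification that $n^*n,\ nn^* \in \fM$ and your care about the partially defined composition $(nn^*)\circ\al_n$ are sound points the paper leaves implicit.
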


\begin{proof}
\begin{align*}
nn^*(\al_n(x)) \cdot n^*n(x) &= n^*(nn^*)n(x) \\
	&= (n^*n)^2(x)
\end{align*}
By \cite{JRen08}, Proposition 4.6,  $\text{dom}(\al_n) = \{x: n^*n(x) > 0\}.$ Since $x$ is in the domain of $\al_n$, we obtain the result.
\end{proof}

\begin{corollary} \label{c:Ginvariant}
Let $n \in \sN$ and $x_0 \in X.$ Then $n^*n$ vanishes on the  pseudoorbit $\sG(x_0)$ if and only if $nn^*$ vanishes on the pseudoorbit.
\end{corollary}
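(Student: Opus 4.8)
The plan is to exploit the symmetry between $n$ and $n^*$, together with the description of $\operatorname{dom}(\al_n)$ and the identity from Lemma~\ref{l:n*n and nn*}. First I would record that the set of normalizers $\sN$ is closed under the adjoint: in Definition~\ref{d:normalizer} the two containments $n^*\fM n \subset \fM$ and $n\fM n^* \subset \fM$ are simply interchanged when $n$ is replaced by $n^*$, so $n^* \in \sN$ whenever $n \in \sN$. Hence $\al_{n^*}$ also lies in the Weyl pseudogroup $\sG$, and the two preceding lemmas can be applied with either $n$ or $n^*$ in the role of the normalizer.

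For one implication it suffices to prove the contrapositive, that if $n^*n$ does not vanish on $\sG(x_0)$ then neither does $nn^*$. So I would take a point $y \in \sG(x_0)$ with $n^*n(y) > 0$. By \cite{JRen08}~Proposition~4.6 this is exactly the statement that $y \in \operatorname{dom}(\al_n)$, so $\al_n(y)$ is defined. Since $\al_n \in \sG$ and the pseudoorbit $\sG(x_0)$ is $\sG$-invariant by Lemma~\ref{l:Ginvariant}(i), the point $\al_n(y)$ again belongs to $\sG(x_0)$. Lemma~\ref{l:n*n and nn*} then gives $nn^*(\al_n(y)) = n^*n(y) > 0$, so $nn^*$ does not vanish on the pseudoorbit.

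The reverse implication comes for free by applying the argument just given verbatim to the normalizer $n^*$: since $(n^*)^*(n^*) = nn^*$ and $(n^*)(n^*)^* = n^*n$, the assertion ``$nn^*$ does not vanish on $\sG(x_0)$ implies $n^*n$ does not vanish'' is precisely the previous implication read for $n^*$. Putting the two directions together yields the claimed equivalence.

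I do not expect a serious obstacle here. The only point that genuinely needs care is the bookkeeping that guarantees $n^* \in \sN$, so that $\al_{n^*}$ is a bona fide element of $\sG$ and the symmetric use of Lemma~\ref{l:n*n and nn*} is legitimate; everything else is a direct appeal to the $\sG$-invariance of the pseudoorbit and to the identity $nn^*(\al_n(x)) = n^*n(x)$.
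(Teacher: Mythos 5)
Your proof is correct and is essentially the paper's argument: both rest on the identity $nn^*(\al_n(x)) = n^*n(x)$ of Lemma~\ref{l:n*n and nn*}, the $\sG$-invariance of the pseudoorbit, and the symmetry obtained by interchanging $n$ and $n^*$. Your contrapositive formulation is in fact slightly tidier than the paper's direct computation, since it makes explicit the point, via $\text{dom}(\al_n) = \{x : n^*n(x) > 0\}$, that $\al_n$ may only be applied at points where $n^*n$ is nonzero.
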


\begin{proof}
Suppose $nn^*$ vanishes on $\sG(x_0),$ and $y = \al_m(x_0) $ for some $m \in \sN.$  By Lemma~\ref{l:n*n and nn*}, $n^*n(y) 
= (nn^* \circ \al_n) \circ \al_m(x_0) = nn^*\circ \al_{nm}(x_0) =0.$ So, $n^*n$ vanishes on $\sG(x_0).$

For the other direction, interchange the roles of $n^*$ and $n.$
\end{proof}

\begin{lemma} \label{l:hat n}
Let $n,\ m \in \sN$ and $y \in \text{dom}(\al_m). $ Then
\[ \hat{n}(x, m, y) \neq 0 \iff \al_m = \al_n \text{ on some neighborhood of } y. \]
\end{lemma}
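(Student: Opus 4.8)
The plan is to translate the nonvanishing of $\hat{n}(x,m,y)$ into a statement about the conditional expectation, and then into a germ condition. Since $y \in \text{dom}(\al_m)$ gives $m^*m(y) > 0$, the duality formula
\[ \hat{n}(x, m, y) = \frac{E(m^* n)(y)}{\sqrt{m^*m(y)}} \]
(with $x = \al_m(y)$) shows that $\hat{n}(x, m, y) \neq 0$ if and only if $E(m^* n)(y) \neq 0$. The implication $(\Leftarrow)$ is then immediate: if $\al_m = \al_n$ on a neighborhood $V$ of $y$, then $V \subseteq \text{dom}(\al_m) \cap \text{dom}(\al_n)$ and the germs of $\al_m, \al_n$ at $y$ coincide, so $[x,m,y]$ and $[x,n,y]$ agree in $G$ and differ by a factor in $\bbT$ in $\Si$; a direct computation gives $\hat{n}(x,n,y) = E(n^*n)(y)/\sqrt{n^*n(y)} = \sqrt{n^*n(y)} \neq 0$ (here $n^*n(y) > 0$ since $y \in \text{dom}(\al_n)$). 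Thus the substance of the lemma is $(\Rightarrow)$, which I would reduce to the single-normalizer claim $(\star)$: for $w \in \sN$ and $y \in X$, $E(w)(y) \neq 0$ if and only if $\al_w = \text{id}$ on a neighborhood of $y$.

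To pass from $(\star)$ to the lemma, I would apply it to $w = m^*n$, which is a normalizer once we know $E(m^*n)(y) \neq 0$ (so $m^*n \neq 0$). Because the assignment $n \mapsto [\,\cdot\,, n, \cdot\,]$ turns multiplication of normalizers into the groupoid product (convolution), the germ of $\al_{m^*n}$ at $y$ is the composite germ of $\al_m^{-1} \circ \al_n$; this germ is the identity germ precisely when $\al_n = \al_m$ on a neighborhood of $y$. Hence $(\star)$ yields $E(m^*n)(y) \neq 0 \iff \al_m = \al_n$ near $y$, which is exactly $(\Rightarrow)$.

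For $(\star)$ itself, first note that, since $\fA$ is unital with $1 \in \fM$, the formula gives $E(a)(y) = \hat{a}([y,1,y]) = \hat{a}(\dot y)$, where $\dot y \in G^{(0)}$ is the unit over $y$; thus $E(w)(y) = \hat{w}(\dot y)$. The key point I would establish is that $\hat{w}$ is supported on the open bisection $B_w = \{[\al_w(z), w, z] : z \in \text{dom}(\al_w)\}$. For this I would compute $w^*w \in \fM$ by convolution, using $\wh{w^*w} = \wh{w^*} * \hat{w}$, the involution formula $f^*(x,n,y) = \ol{f(y,n^*,x)}$, and the convolution formula evaluated at the unit $[y_0, 1, y_0]$:
\[ w^*w(y_0) = \wh{w^*w}([y_0,1,y_0]) = \sum_{s(\eta) = y_0} |\hat{w}(\eta)|^2, \]
the sum running over the discrete source-fibre. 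If $y_0 \in \text{dom}(\al_w)$, the single term $\eta_0 = [\al_w(y_0), w, y_0]$ already contributes $|\hat{w}(\eta_0)|^2 = w^*w(y_0)$; if $y_0 \notin \text{dom}(\al_w)$, the left side is $0$. Either way every $\eta \in s^{-1}(y_0)$ with $\eta \neq \eta_0$ satisfies $\hat{w}(\eta) = 0$, so $\hat{w}$ vanishes off $B_w$. Finally $\dot y \in B_w$ if and only if the germ of $\al_w$ at $y$ is the identity germ, i.e. $\al_w = \text{id}$ near $y$; combining these gives $\hat{w}(\dot y) \neq 0 \iff \al_w = \text{id}$ near $y$, establishing $(\star)$.

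The main obstacle is the Parseval-type identity and the resulting claim that $\hat{w}$ is supported exactly on its bisection $B_w$. Here one must handle the twist with care: the values $\hat{w}(\eta)$ are sections over $\Si$, but $|\hat{w}(\eta)|$ descends to $G$, which is what the identity uses; one must confirm that the étale structure makes the source-fibre discrete, so that the sum is literally indexed by the elements $[w_i, m_i, y_0]$ of the convolution formula, and verify that $\eta_0$ is genuinely one of those terms. Everything else—the reduction to $(\star)$, the $(\Leftarrow)$ computation, and the identification of $\dot y \in B_w$ with the triviality of the germ of $\al_w$ at $y$—is routine once this support statement is in hand.
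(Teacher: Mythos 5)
Your proof is correct, and its skeleton matches the paper's: both translate $\hat{n}(x,m,y)\neq 0$ into $E(m^*n)(y)\neq 0$ via the duality formula, and then read this as triviality of the germ of $\al_{m^*n}=\al_m^{-1}\circ\al_n$ at $y.$ Where you genuinely diverge is in how the key input is obtained. The paper simply cites Renault --- Proposition 4.7 (the open support of a normalizer is a bisection) for the forward direction and Corollary 5.3 for the converse --- whereas you re-derive the support statement from scratch through the Parseval-type identity $w^*w(y_0)=\sum_{s(\eta)=y_0}|\hat{w}(\eta)|^2,$ observing that the single term at $\eta_0=[\al_w(y_0),w,y_0]$ already exhausts the sum, so $\hat{w}$ vanishes off the bisection of germs of $\al_w.$ (This is exactly the convolution computation the paper itself performs later, in Lemma~\ref{l:ker pi0}, so your argument stays entirely within the paper's toolkit.) Your route buys a self-contained proof plus the quantitative refinement $|\hat{n}(x,m,y)|=\sqrt{n^*n(y)}$ whenever the germs agree; the paper's route is shorter but leans on the cited machinery. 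Your strictly local formulation of $(\star)$ is in fact slightly more careful than the paper's wording, which asserts $\mathrm{id}_{\sO}=\al_{m^*n}$ on the whole domain $\sO$ of $m^*n,$ whereas $E(m^*n)(y)\neq 0$ at the single point $y$ only forces the germ of $\al_{m^*n}$ at $y$ to be trivial --- which is all the lemma requires. The two technical points you flag are indeed supplied by the standing hypotheses: the source fibre is discrete because the groupoid is \'etale, and $|\hat{w}|$ descends from $\Si$ to $G$ by the equivariance $\hat{a}(x,zn,y)=\bar{z}\,\hat{a}(x,n,y)$ together with the fact that $\Si\to G$ is a principal $\bbT$-bundle; likewise the germ identities $\al_{m^*}=\al_m^{-1}$ and $\al_{mn}=\al_m\circ\al_n$ are Renault's Lemma 4.9, which the paper uses elsewhere (e.g.\ in Lemma~\ref{l:E(n^*an)}). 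No gap remains.
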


\begin{proof}
By definition, $\hat{n}(x, m, y) \neq 0$ if and only if $E(m^*n)(y) \neq 0.$  Since $m^*n$ is a normalizer, and hence its open support is a bisection (\cite{JRen08}, Proposition 4.7),
$E(m^*n) \neq 0$ is equivalent to asserting that $m^*n$ lies in the unit space.  But then,
$ \text{id}_{\sO} = \al_{m^*n} = \al_m^* \circ \al_n = (\al_m)^{-1} \circ \al_n,$ where $\sO$ is the domain of $m^*n,$  equivalently, $\al_n = \al_m$
on the intersection of their domains.

Conversely, if $\al_n, \ \al_m$ coincide on some nonempty open set $\sO,$ it follows that $n, m$ are nonzero on the bisection $S = \{ [\al_n(y), \al_n, y]
= [\al_m(y) \al_m, y], y \in \sO \}.$ (\cite{JRen08}, Corollary 5.3)  Hence $\hat{n}(x, m, y) \neq 0.$

\end{proof}

\begin{corollary} \label{c:fn}
For $n \in \sN, \ f \in C(X), \ fn = n f\circ \al_n .$
\end{corollary}

\begin{proof}
By Remark~\ref{r:mult by C(X)},  for $f,\ g \in C(X),$ we have
$\wh{fn}(x, m, y) = f(x)\hat{n}(x, m, y) = f\circ \al_m(y) \hat{n}(x, m, y)$ and $\wh{ng}(x, m, y) = \hat{n}(x, m, y)g(y) = g(y)\hat{n}(x, m, y).$
By Lemma~\ref{l:hat n}, $\hat{n}(x, m, y) \neq 0 \iff  \al_m = \al_n$ in some neighborhood of $y.$ Thus, taking $g = f\circ \al_n,$ we have
$\wh{fn} = \wh{nf\circ \al_n},$ or $fn = nf\circ \al_n.$
\end{proof}

Observe that Proposition 4.6 of \cite{JRen08} follows from Corollary~\ref{c:fn} by left multiplying by $n^*.$

\begin{lemma} \label{l:E(n^*an)}
If $a \in \fA,$ and $n \in \sN,$ then $E(n^*an) = n^*E(a)n.$
\end{lemma}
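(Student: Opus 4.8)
The plan is to pass to the groupoid function picture and reduce the identity to a pointwise statement on the unit space $G^{(0)}$, which I can then verify by two applications of the convolution formula. Recall that the faithful conditional expectation is realized as restriction of $\hat{\,\cdot\,}$ to the unit space, so that $E(b)(x) = \hat b(x, \text{id}, x)$ for $b \in \fA$ and $x \in X$ (this follows from the displayed duality formula for $\hat a$ by taking $n$ to be a local unit in $\fM$; here $\text{id}$ denotes the identity germ at the relevant point). Since both sides of the asserted identity lie in $\fM = C(X)$, it suffices to show $\widehat{n^* a n}(x, \text{id}, x) = (n^* E(a) n)(x)$ for every $x \in X$. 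As $E$ is contractive and $a \mapsto n^* a n$ is bounded, I may also assume $a \in \fA_c$ (Notation~\ref{n:compact support}), so that the convolution sums below are finite; the general case follows by continuity.

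First I would dispose of the right-hand side. Since $E(a) \in C(X)$, Corollary~\ref{c:fn} gives $E(a)\, n = n\,(E(a)\circ \al_n)$, whence $n^* E(a) n = (E(a)\circ\al_n)\, n^* n$. Evaluating on the unit space yields $(n^* E(a) n)(x) = E(a)(\al_n(x))\, n^*n(x)$, with the convention that this vanishes when $x \notin \text{dom}(\al_n)$, equivalently when $n^*n(x) = 0$.

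The core of the argument is the left-hand side. Write $x' = \al_n(x)$ and expand $\widehat{(n^* a) n}(x, \text{id}, x)$ by the convolution formula as a sum over $s^{-1}(x)$. By Lemma~\ref{l:hat n}, the factor $\hat n(w_i, m_i, x)$ is nonzero only for the germ $m_i = \al_n$ at $x$ (which forces $x \in \text{dom}(\al_n)$); since the open support of the normalizer $n$ is a bisection, this germ occurs for exactly one index, so the sum collapses to $\widehat{n^* a}(x, \al_n^{-1}, x')\,\hat n(x', \al_n, x)$. Expanding $\widehat{n^* a}(x, \al_n^{-1}, x')$ in the same way and using that the open support of $n^*$ is the bisection of $\al_n^{-1}$, the only surviving term carries the germ $\text{id}$ at $x'$, giving $\widehat{n^* a}(x, \al_n^{-1}, x') = \widehat{n^*}(x, \al_n^{-1}, x')\,\hat a(x', \text{id}, x')$. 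Invoking the involution formula $\widehat{n^*}(x, \al_n^{-1}, x') = \overline{\hat n(x', \al_n, x)}$ together with $\hat a(x', \text{id}, x') = E(a)(x')$, I obtain
\[ \widehat{n^* a n}(x, \text{id}, x) = |\hat n(x', \al_n, x)|^2\, E(a)(\al_n(x)). \]

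It then remains to identify $|\hat n(\al_n(x), \al_n, x)|^2 = n^*n(x)$, and this follows by running the very same two-step convolution computation on $b = n^* n$: since $n^*n \in C(X)$ one has $\widehat{n^*n}(x,\text{id},x) = n^*n(x)$, while the computation above applied to $b=n^*n$ produces the factor $|\hat n(x',\al_n,x)|^2$ on the diagonal. Substituting gives $\widehat{n^*an}(x,\text{id},x) = n^*n(x)\,E(a)(\al_n(x))$, matching the right-hand side from the second paragraph, which finishes the proof. The main obstacle is the bookkeeping in the third paragraph: correctly tracking germs and their inverses in the triple notation, and justifying that each source fibre meets the bisection of a normalizer in a single point so that the sums collapse to one term. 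Lemma~\ref{l:hat n} and the bisection property of normalizers from \cite{JRen08} are exactly the tools that make this rigorous, with Lemma~\ref{l:n*n and nn*} available as a cross-check on the diagonal values.
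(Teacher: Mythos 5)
Your proof is correct, but it takes a genuinely different route from the paper's. The paper argues algebraically: for $a \in \fA_c$ it writes $a = \sum_{i=0}^N n_i$ with $n_i \in \sN$ and $n_0 = E(a),$ and disposes of each cross term by noting that for $i \geq 1$ the germ $\al_{n^*n_in} = \al_n^{-1}\circ \al_{n_i}\circ \al_n$ is the identity on no open subset of its domain (Renault's Lemma 4.9), whence $E(n^*n_in) = 0;$ the surviving term gives $E(n^*an) = n^*n_0n = n_0\circ\al_n\, n^*n = n^*E(a)n,$ and continuity of $E$ handles general $a.$ You never decompose $a$ into normalizers: instead you evaluate $\wh{n^*an}$ on the unit space and collapse the two convolution sums using Lemma~\ref{l:hat n} together with the fact that the open support of a normalizer is a bisection (so each source fibre meets it at most once), then finish with the involution formula and Corollary~\ref{c:fn}. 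Both arguments exploit the same underlying fact---conjugation by $n$ transports germs by $\al_n,$ and only the diagonal survives the expectation---but they package it differently: yours treats all $a \in \fA_c$ in one uniform pointwise computation and avoids Renault's Lemma 4.9 entirely, at the cost of heavier germ bookkeeping and of relying on the pointwise convolution formula, which your reduction to $\fA_c$ legitimately covers (the support of $\hat a$ lies in finitely many bisections and $\hat n$ is supported on a single bisection, so every sum is finite; note the paper itself computes such sums for general elements, e.g.\ in Lemma~\ref{l:ker pi0}). One economy you missed: the identification $|\hat n(\al_n(x),\al_n,x)|^2 = n^*n(x)$ in your last paragraph is immediate from the duality formula, since $\hat n(\al_n(x),n,x) = E(n^*n)(x)/\sqrt{n^*n(x)} = \sqrt{n^*n(x)},$ so the second two-step convolution run is unnecessary; and the degenerate case $x \notin \text{dom}(\al_n)$ is harmless on both sides, as $E(n^*an)$ and $n^*E(a)n$ are both dominated in modulus by $\|a\|\, n^*n.$ In short: the paper's proof is shorter once Renault's lemma is granted, while yours is more self-contained relative to the paper's own Lemmas~\ref{l:hat n} and Corollary~\ref{c:fn}.
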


\begin{proof} If we can prove the result holds on a dense subset of $\fA,$ then the result will follow from the continuity of $E.$
Thus, we take $a \in \fA_c,$ so $a$ has the form $a = \sum_{i=0}^N  n_i$ where $N \in \bbN,$ and the $n_i \in \sN$ and $n_0 = E(a) .$
Then $n^*an = \sum_{i=0}^N n^*n_in$ and we claim that $E(n^*an) = n_0\circ \al_n n^*n.$  Let $ 0 < i \leq N.$
Now the partial homeomorphism $\al_{n_i}$ is not the identity on any open subset of $X,$ otherwise $E(n_i)$ would be nonzero.
If $n^* n_i n \neq 0,$ then by \cite{JRen08}, Lemma 4.9, $\al_{n^* n_i n} = \al_{n}^{-1} \circ \al_{n_i}\circ \al_n$ is not the identity on any open
subset of its domain, so that $E(n^*n_i n) = 0.$ This verifies the claim, and the lemma follows.
\end{proof}

\begin{lemma} \label{l:isotropy preserved}
Let $x_0 \in X$ and $y$ be in the pseudo-orbit of $x_0,$ so $y = \al_n(x_0)$ for some $n \in \sN.$
Then $y$ has trivial isotropy if and only if $x_0$ has trivial isotropy.
\end{lemma}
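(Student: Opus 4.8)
The plan is to produce a bijection between the isotropy group at $x_0$ and the isotropy group at $y$ by conjugating with the groupoid element attached to $n$, and then to verify that this bijection carries elements of the unit space to elements of the unit space. Since the whole construction is symmetric in $x_0$ and $y$ (interchanging the roles of $n$ and $n^*$), exhibiting such a bijection settles both implications simultaneously.

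Since $y = \al_n(x_0)$ we have $n^*n(x_0) > 0$, so $\si := [y, n, x_0]$ lies in $(G,\Si)$ with $r(\si) = y$ and $s(\si) = x_0$, and $\si^{-1} = [x_0, n^*, y]$. Given any element $[x_0, m, x_0]$ of the isotropy group at $x_0$ (so $m \in \sN$ with $\al_m(x_0) = x_0$), the product $nmn^*$ is again a normalizer, and by the composition rule $\al_{ab} = \al_a \circ \al_b$ of \cite{JRen08} together with $\al_{n^*} = \al_n^{-1}$ we obtain $\al_{nmn^*} = \al_n \circ \al_m \circ \al_n^{-1}$, which fixes $y = \al_n(x_0)$. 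Hence the groupoid product
\[ \si \, [x_0, m, x_0] \, \si^{-1} = [y, nmn^*, y] \]
is an isotropy element at $y$. The assignment $[x_0, m, x_0] \mapsto [y, nmn^*, y]$ is thus a well-defined map from the isotropy group at $x_0$ to that at $y$, and conjugation by $\si^{-1}$ furnishes a two-sided inverse, so it is a bijection. Well-definedness on germs is automatic, since the groupoid product depends only on the germs of the partial homeomorphisms involved.

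It remains to see that this bijection preserves the unit space, which is where the argument really turns. An isotropy element $[x_0, m, x_0]$ lies in $G^{(0)}$ exactly when its underlying germ is that of the identity, that is, when $\al_m$ is the identity on a neighborhood of $x_0$; this is immediate from the germ description of $(G,\Si)$. Likewise $[y, nmn^*, y] \in G^{(0)}$ exactly when $\al_{nmn^*} = \al_n \circ \al_m \circ \al_n^{-1}$ is the identity on a neighborhood of $y$. Because $\al_n$ is a homeomorphism carrying a neighborhood of $x_0$ onto a neighborhood of $y$, the map $\al_n \circ \al_m \circ \al_n^{-1}$ is the identity near $y$ if and only if $\al_m$ is the identity near $x_0$. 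Therefore $[x_0, m, x_0] \in G^{(0)}$ if and only if $[y, nmn^*, y] \in G^{(0)}$, so the bijection restricts to a bijection between the nontrivial isotropy elements at $x_0$ and those at $y$; consequently $x_0$ has trivial isotropy if and only if $y$ does. The only delicate point in writing this out rigorously is the bookkeeping of domains, namely ensuring that the relevant germs are all defined on a common neighborhood of $x_0$ (respectively $y$); but this follows from the openness of the domains of $\al_n$, $\al_m$, and their composites, together with $\al_m(x_0) = x_0 \in \text{dom}(\al_n)$.
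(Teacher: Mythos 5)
Your proof is correct and takes essentially the same route as the paper: both conjugate an isotropy element at $x_0$ by $n$, sending $m$ to $nmn^*$ and using $\al_{nmn^*} = \al_n \circ \al_m \circ \al_{n^*}$ to see that $y$ is fixed. The only cosmetic difference is that the paper certifies non-triviality of the conjugated element via the conditional expectation, computing $E(nmn^*) = nE(m)n^* = 0$ from Lemma~\ref{l:E(n^*an)}, whereas you use the germ characterization of the unit space directly; both verifications are sound.
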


\begin{proof}
Suppose $x_0$ has non-trivial isotropy, so there exists $m \in \sN$ with $ E(m) = 0$ and $\al_m(x_0) = x_0$ Then
\[ \al_{nmn^*}(y) =  \al_n \circ \al_m \circ \al_{n^*}(y) = \al_n \circ \al_m(x_0) = \al_n(x_0) = y \]
and $E(nmn^*) = nE(m)n^* = 0,$ so that $y$ has non-trivial isotropy.  The converse is proved by reversing the roles of $y$ and $x_0.$
\end{proof}

\subsection{Dirichlet algebras} \label{ss:Dirichlet}

For a nonsef-adjoint operator algebra $\sA,$ we say that a C$^*$-algebra $\fA$ is a \emph{C$^*$-cover} if there is a completely isometric embedding $j: \sA \mapsto \fA.$
By \cite{Ham79}, \cite{VPaul86} for given an operator algebra $\sA$  there is a unique minimal C$^*$-cover, called the C$^*$-envelope, with the propery that
if $\fA$ is any C$^*$-cover there is a surjective C$^*-$homomorphism $\tau: \fA \to $ C$^*_{env}(\sA)$ such that  the restriction of $\tau$  to $j(\sA) $
is completely isometric. If $\fA$ is a C$^*$-cover of $\fA,$ to simplify notation we will generally write $\sA \subset \fA.$ 

\begin{definition} \label{d:Dirichlet}
Recall from the Introduction that an operator algebra $\sA$ is \emph{Dirichlet} if $\sA + \sA^*$ is dense in the C$^*$-envelope of $\sA.$
If $\fA$ is a C$^*$-cover for $\sA,$ and if $\sA + \sA^*$ is dense in $\fA,$ we will say that $\sA$ is \emph{Dirichlet in $\fA.$}
\end{definition}
It follows immediately that if $\sA$ is Dirichlet in some C$^*$-cover $\fA,$ then it is Dirichlet.
In the examples of section~\ref{s:examples}, the operator algebra $\sA$ arises naturally as a subalgebra of C$^*$-algebra $\fA$
for which $\sA + \sA^*$ is dense in $\fA.$ It suffices to work with the C$^*$-cover.

The following result is similar to Proposition 0.1 of \cite{OrrPet95}, though the context is different.
\begin{proposition} \label{p:dirichlet}
Let $\sA$ be a unital operator algebra with C$^*$-cover $\fA$ such that $\sA$ is Dirichlet in $\fA.$ Let $\pi: \fA \to \sB(H)$ be
an irreducible representation of $\fA.$ If the weak closure of $\pi(\sA)$ contains a masa, then the lattice $\text{Lat}(\pi(\sA))$ is a nest, and
$\pi(\sA)$ is weakly dense in a nest algebra.
\end{proposition}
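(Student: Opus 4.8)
The plan is to pass to the weak-operator closure $\sS := \ol{\pi(\sA)}^{\,w}$ and work entirely inside $\sB(H)$, since $\Lat(\pi(\sA)) = \Lat(\sS)$ and weak density of $\pi(\sA)$ in $\sS$ is exactly what lets the Dirichlet hypothesis propagate. First I would record that, because $\pi$ is irreducible, $\pi(\fA)$ is weakly dense in $\sB(H)$; as $\sA + \sA^*$ is dense in $\fA$, the image $\pi(\sA) + \pi(\sA)^*$ is weakly dense in $\sB(H)$, and hence $\sS + \sS^*$ is weakly dense in $\sB(H)$. Let $\sM \subseteq \sS$ be the masa supplied by the hypothesis. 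The first key observation is that every invariant projection of $\sS$ lies in $\sM$: if $P \in \Lat(\sS)$ then $P$ is invariant under the self-adjoint algebra $\sM \subseteq \sS$, so $PH$ reduces $\sM$, whence $P$ commutes with $\sM$ and, by maximality, $P \in \sM' = \sM$. In particular any two invariant projections commute.

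Next I would prove that $\Lat(\sS)$ is totally ordered, which gives the first conclusion. Suppose $P, Q \in \Lat(\sS)$ were incomparable. Since they commute, decompose $H$ into the four ranges of $PQ$, $P(I-Q)$, $(I-P)Q$, $(I-P)(I-Q)$, with the second and third summands both nonzero. Invariance of $P$ and $Q$ (i.e. $(I-P)SP = 0$ and $(I-Q)SQ = 0$ for $S \in \sS$) forces, for every $S \in \sS$, the two cross corners between $P(I-Q)H$ and $(I-P)Q H$ to vanish; taking adjoints the same corners vanish for $\sS^*$, hence for $\sS + \sS^*$, and---since compression to a pair of fixed projections is weakly continuous---for its weak closure as well. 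This contradicts weak density of $\sS + \sS^*$ in $\sB(H)$, since those corners are nonzero in $\sB(H)$. Thus $\Lat(\sS)$ is a chain; being automatically closed under intersections and closed spans and containing $0, H$, it is a nest $\sN \subseteq \sM$.

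For the second conclusion I must show $\sS$ is reflexive, i.e. $\sS = \Alg\,\sN$; the inclusion $\sS \subseteq \Alg\,\sN$ is immediate. By the Erd\H{o}s density theorem the finite-rank operators in $\Alg\,\sN$ are weakly dense in it, and each is a sum of rank-one operators of $\Alg\,\sN$; since $\sS$ is weakly closed it suffices to place each rank-one $e f^* \in \Alg\,\sN$ in $\sS$. Writing $e \in NH$ and $f \in (I - N_-)H$ for a suitable $N \in \sN$ with predecessor $N_-$, I would take a net $S_\lambda + R_\lambda^* \to e f^*$ from $\sS + \sS^*$ and compress by $N$ on the left and $I - N_-$ on the right. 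The $S_\lambda$-part stays in $\sS$ because $N, I-N_- \in \sM \subseteq \sS$; the $R_\lambda^*$-part is killed off the diagonal by upper-triangularity of $R_\lambda$, and the only surviving piece is the compression $E R_\lambda^* E$ to the atom $E = N - N_-$.

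The main obstacle is precisely this atomic piece: I must show $\sB(EH) \subseteq \sS$ for every atom $E$ of $\sN$. Here I would argue that the compressed algebra $E\sS E$ is transitive---if $0 < P < E$ were invariant for $E\sS E$, a short block computation shows $N_- + P \in \Lat(\sS) = \sN$, contradicting that $E$ is an atom---and that it contains the compressed masa $\sM E$. Arveson's transitive algebra theorem then gives that $E\sS E$ is weakly dense in $\sB(EH)$; since $E \in \sM \subseteq \sS$ forces $E\sS E \subseteq \sS$ and $\sS$ is weakly closed, we obtain $\sB(EH) \subseteq \sS$, so in particular $E R_\lambda^* E \in \sS$. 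This finishes the compression, yielding $e f^* \in \sS$ and hence $\Alg\,\sN \subseteq \sS$. I expect the atom analysis, and the correct invocation of Arveson's theorem on each atom, to be the delicate part; the total-ordering half is comparatively routine once the invariant projections are pinned inside the masa.
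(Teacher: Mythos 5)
Your proof is correct, and it divides against the paper's as follows. For the first conclusion you follow essentially the paper's route: the paper likewise shows each invariant projection commutes with the masa (via nets from $\pi(\sA)$ converging weakly to projections of $\sM$) and hence lies in $\sM$, and rules out incomparable $P_1, P_2$ by picking nonzero vectors $h_1, h_2$ in the ranges of $P_1^{\perp}P_2$ and $P_1P_2^{\perp}$, using irreducibility to find $b \in \fA$ with $\langle h_2, \pi(b)h_1\rangle \neq 0$ and the Dirichlet property to replace $b$ by $a_1 + a_2^*$; your corner computation against the weak density of $\pi(\sA) + \pi(\sA)^*$ in $\sB(H)$ is the same contradiction in operator rather than vector form. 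For the second conclusion you genuinely diverge: the paper disposes of it in one line by citing Corollary 12.12 of \cite{Dav88} (Arveson's theorem that a weakly closed algebra containing a masa is reflexive), which needs no further use of the Dirichlet hypothesis once $\Lat(\pi(\sA))$ is known to be a nest, whereas you reprove the needed instance directly: Erd\H{o}s density and the rank-one lemma reduce everything to rank ones $ef^*$ attached to a pair $(N, N_-)$, your compression $N(\cdot)(I-N_-)$ of a net from $\sS + \sS^*$ (a second appeal to the Dirichlet condition, via weak density in $\sB(H)$) leaves only the atomic corner $ER_\lambda^*E$, and Arveson's transitive algebra theorem on the atom absorbs it into $\sS$. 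Your supporting steps check out: $E\sS E = \sS \cap E\sB(H)E$ is weakly closed, contains the masa $\sM E$ of $\sB(EH)$, and is transitive because an invariant $0 < P < E$ would give $N_- + P \in \Lat(\sS) = \sN$, contradicting that $N$ covers $N_-$; and your argument tacitly covers the atomless case $N_- = N$, where the $R_\lambda^*$ part vanishes outright. The trade-off: the paper's citation is shorter and isolates the Dirichlet hypothesis entirely in the lattice step, while your version is self-contained modulo classical nest-algebra facts, at the cost of swapping one deep black box (masa-reflexivity) for another (the transitive algebra theorem), and it exhibits concretely how weak density of $\sS + \sS^*$ manufactures the rank-one operators of $\Alg \sN$.
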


\begin{proof} 
Let $\sM \subset \sB(H)$ be a masa contained in $\ol{\pi(\sA)}^{\text{wk}}$ and $P_1,\ P_2$ be two projections in $\sB(H)$ invariant under $\pi(\sA),$
and let $F$ be a projection in $\sM.$
If $\{a_{\nu}\}$ is a net in $\pi(\sA)$ converging weakly to $F,$ then since $\pi(a_{\nu})P_i = P_i \pi(a_{\nu})P_i $ for all $\nu,$
we have that $FP_i = P_iFP_i, \ i = 1, 2.$ Now $P_i^{\perp}$ is invariant for $\pi(\sA^*), $ so that $\pi(a_{\nu}^*)P_i^{\perp} = P_i^{\perp}\pi(a_{\nu}^*)P_i^{\perp},$
so that we obtain $FP_i^{\perp} = P_i^{\perp}FP_i^{\perp},$ or equivalently, $P_iF = P_iFP_i, \ i = 1, 2.$  Thus we conclude $P_iF = FP_i,$ and since $\sM$
is generated by its projections, $P_i$ commutes with $\sM$ and so $P_i \in \sM, \ i = 1, 2.$  It follows that $P_1,\ P_2$ commute.

We have shown that the lattice of invariant projections is commutative, and now we claim it is in fact a nest; that is, there is a total ordering on the lattice of projections.
To this end, suppose that is not the case, so there are projections $P_1, P_2 \in \text{Lat}(\pi(\sA))$ so that $0 \neq F_1:= P_1^{\perp}P_2$ and $0 \neq F_2 := P_1P_2^{\perp}.$
Let $h_1, $ (resp., $h_2$) be a nonzero vector in the range of $F_1$ (resp., $F_2$). Since $\pi$ is an irreducible representation of $\fA,$ every nonzero vector is cyclic. Thus 
there is $b \in \fA$ so that $<h_2, \pi(b)h_1> \neq 0.$ Since $\sA + \sA^*$ is dense in $\fA,$ there are $a_1, a_2 \in \sA$ such that $<h_2, \pi(a_1 + a_2^*)h_1> \neq 0.$
But then
\begin{align*}
<h_2, \pi(a_1 + a_2^*)h_1> &= <h_2, \pi(a_1)h_1> + <h_2, \pi(a_2)^*h_1> \\
	&= <h_2, \pi(a_1)h_1> + <\pi(a_2)h_2, h_1> \\
	& = 0
\end{align*}
since $\pi(a_1)h_1$ belongs to the range of $P_2,$ while $h_2$ is orthogonal to the range of $P_2,$ and $\pi(a_2)h_2 $ belongs to the range of $P_1,$ while $h_1$ is
orthogonal to the range of $P_1.$

This contradiction establishes the claim.  Since Lat$(\pi(\sA))$ is a nest,  $\pi(\sA)$ is weakly dense in a nest algebra follows from Corollary 12.12 of \cite{Dav88}.
\end{proof}

\begin{notation} \label{n:representation}
In section~\ref{s:MASAS and reps} we will deal with representations of non-selfadjoint operator algebras $\sA.$  While in the literature a representation is always
a homomorphism $\pi: \sA \to \sB(H)$ for some Hilbert space $H,$ there are various conditions imposed on $\pi:$ e.g., boundedness, contractive, completely contractive.
Here a representation will always be a  \emph{completely contractive} homomorphism $\pi: \sA \to \sB(H).$  Indeed, these representations arise as restrictions to the
operator alagebra $\sA$ of representations of a C$^*$-cover.
\end{notation}


\section{Representations of Cartan pairs} \label{s:reps of Cartan pairs}

Next we show that if $x_0 \in X$ has non-trivial isotropy, then the state of $C(X),\ f \mapsto f(x_0)$ does not have a unique extension to a state of $\fA.$

Now if $x_0$ has non-trivial isotropy, then there is an $n \in \sN$ with $E(n)(x_0) = 0$ and $[x_0, n, x_0] \in (G, \Si).$ Define the linear functional $\vpi$ on $\fA$ by
\[ \vpi(a) = \hat{a}(x_0, n, x_0), \ a \in \fA.\]
\begin{lemma} \label{l:isotropy}
Let $\rho_0$ denote that state of $\fA$ given by $\rho_0(a) = E(a)(x_0).$  Let $a \in \fA^+.$ Then
\[ |\vpi(a)| \leq \rho_0(a) .\]
\end{lemma}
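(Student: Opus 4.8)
The plan is to reduce to the case $a = b^*b$ and then recognize both $\rho_0(a)$ and $\vpi(a)$ as sums over the source fibre $s^{-1}(x_0)$, which is discrete since the groupoid is \'etale, so that the Cauchy--Schwarz inequality applies. Since $a \in \fA^+$, I would put $b = a^{1/2} \in \fA$, so that $a = b^*b$. Writing $s^{-1}(x_0) = \{[w_i, m_i, x_0]\}$ and using the convolution and involution formulas recorded in Section~\ref{s:BandN}, I would expand
\[ \vpi(a) = \hat a(x_0, n, x_0) = \wh{b^* * b}(x_0, n, x_0) = \sum_i \ol{\hat b(w_i, m_i n^*, x_0)}\,\hat b(w_i, m_i, x_0), \]
and, evaluating instead at the unit $[x_0, \mathrm{id}, x_0] \in G^{(0)}$,
\[ \rho_0(a) = E(a)(x_0) = \hat a(x_0, \mathrm{id}, x_0) = \sum_i |\hat b(w_i, m_i, x_0)|^2 . \]
Convergence of these sums is automatic: the second is the value at $x_0$ of the continuous function $E(a) \in C(X)$, hence finite.

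Next I would apply Cauchy--Schwarz to the expression for $\vpi(a)$, obtaining
\[ |\vpi(a)| \leq \Big(\sum_i |\hat b(w_i, m_i n^*, x_0)|^2\Big)^{1/2}\Big(\sum_i |\hat b(w_i, m_i, x_0)|^2\Big)^{1/2}. \]
The second factor is $\rho_0(a)^{1/2}$. For the first factor, the key observation is that $\si_0 := [x_0, n, x_0]$ is an element of the isotropy group at $x_0$, that is, $r(\si_0) = s(\si_0) = x_0$, so right translation $\ga \mapsto \ga\,\si_0^{-1}$ is a bijection of $s^{-1}(x_0)$ onto itself, carrying $[w_i, m_i, x_0]$ to $[w_i, m_i n^*, x_0]$. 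Thus $\{[w_i, m_i n^*, x_0]\}$ is merely a reindexing of the fibre $s^{-1}(x_0)$, whence $\sum_i |\hat b(w_i, m_i n^*, x_0)|^2 = \sum_i |\hat b(w_i, m_i, x_0)|^2 = \rho_0(a)$. Combining the two factors gives $|\vpi(a)| \leq \rho_0(a)$, as required.

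Conceptually this is the assertion that $h \mapsto \hat a(x_0, h, x_0)$ is a positive--definite function on the isotropy group of $x_0$ --- a matrix coefficient of the unitary of right translation in the GNS (equivalently, regular) representation attached to $\rho_0$ --- so that its modulus is dominated by its value at the identity. I expect the only real friction to be bookkeeping in the twist $\Si$: the individual numbers $\hat b(w_i, m_i, x_0)$ depend on a choice of representative over each point of $G$ and carry $\bbT$-phases, and one must check that the reindexing $[w_i, m_i, x_0] \mapsto [w_i, m_i n^*, x_0]$ is compatible with those choices. This is harmless for the two norm factors, since only $|\hat b|$, which is well defined on $G$, enters them; and well-definedness of the cross term is inherited from that of the genuine product $b^* * b = a$. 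A secondary technical point is justifying that $\wh{b^*b}$ on the isotropy element is given term-by-term by the displayed convolution sum, but this follows at once from the formulas of Section~\ref{s:BandN} once one notes that the source fibre is discrete.
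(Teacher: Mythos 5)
Your proof is correct, but it takes a genuinely different route from the paper's. You work concretely on the source fibre: expand both $\vpi(a)=\wh{b^**b}(x_0,n,x_0)$ and $\rho_0(a)=\wh{b^**b}(x_0,1,x_0)$ as sums over $s^{-1}(x_0)$, apply Cauchy--Schwarz in $\ell^2(s^{-1}(x_0))$, and use that right translation by the isotropy element $\si_0=[x_0,n,x_0]$ is a bijection of the fibre onto itself to identify the two norm factors --- this is exactly where the hypothesis $\al_n(x_0)=x_0$ enters, and your reading of the bound as positive-definiteness of $h\mapsto \hat a(x_0,h,x_0)$ on the isotropy group is the right conceptual summary. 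The paper instead argues abstractly at the level of the state: it writes $\vpi(a)=\rho_0(a^{1/2}\,(a^{1/2}n))/\sqrt{n^*n(x_0)}$, applies the Cauchy--Schwarz inequality for the state $\rho_0$, and then reduces $\rho_0(n^*an)$ via the bimodule identity $E(n^*an)=n^*E(a)n$ (Lemma~\ref{l:E(n^*an)}) together with $n^*E(a)n=E(a)\circ\al_n\cdot n^*n$, finishing with the same fixed-point fact $\al_n(x_0)=x_0$. The two arguments are GNS-dual to one another; yours buys transparency (and incidentally sidesteps the paper's step $|\rho_0(n^*a)|=|\rho_0(an)|$, which deserves its own justification), while the paper's avoids any appeal to the pointwise convolution expansion for general, non-compactly-supported elements of the reduced algebra. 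That expansion, with absolute convergence, is the only point you lean on without full proof; it is legitimate because fibre restrictions of elements of $C^*_r(G,\Si)$ are square-summable with $\ell^2$-norm at most the C$^*$-norm, and the paper itself uses the identical expansion for arbitrary $a\in\fA$ in Lemma~\ref{l:ker pi0}, so your usage is consistent with the paper's conventions. Your handling of the twist is also sound: each summand $\ol{\hat b(\ga\si_0^{-1})}\,\hat b(\ga)$ is invariant under replacing a representative $m_i$ by $zm_i$, $z\in\bbT$, so the cross term, like $|\hat b|$, is well defined at the level of $G$.
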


\begin{proof}
\begin{align*}
|\vpi(a)| &= |\hat{a}(x_0, n, x_0)| \\
	&= \frac{|\rho_0(n^*a)|}{\sqrt{n^*n(x_0)}} \\
	&= \frac{|\rho_0(an)|}{\sqrt{n^*n(x_0)}} \\
	&= \frac{|\rho_0(a^{\frac{1}{2}}\, (a^{\frac{1}{2}}n))|}{\sqrt{n^*n(x_0)}} \\
	&\leq \frac{\sqrt{\rho_0(a)} \, \sqrt{\rho_0((a^{\frac{1}{2}}n)^* (a^{\frac{1}{2}}n))}}{\sqrt{n^*n(x_0)}} \\
	&\leq  \frac{\sqrt{\rho_0(a)} \, \sqrt{\rho_0(n^*an)}}{\sqrt{n^*n(x_0)}} \\
	&\leq \frac{\sqrt{\rho_0(a)} \, \sqrt{E(n^*an)(x_0)}}{\sqrt{n^*n(x_0)}} \\
	&\leq \frac{\sqrt{\rho_0(a)} \, \sqrt{(n^*E(a)n)(x_0)}}{\sqrt{n^*n(x_0)}} \\
	&\leq  \frac{\sqrt{\rho_0(a)} \, \sqrt{E(a)\circ \al_n(x_0) \, n^*n(x_0)}} {\sqrt{n^*n(x_0)}}\\
	&\leq \sqrt{\rho_0(a)} \, \sqrt{E(a) \circ \al_n(x_0)}
\end{align*}

But since $\al_n$ fixes $x_0,$ we have that $E(a) \circ \al_n(x_0) = E(a)(x_0) = \rho_0(a).$
Thus we obtain $ |\vpi(a)| \leq \sqrt{\rho_0(a)} \, \sqrt{\rho_0(a)} = \rho_0(a).$
\end{proof}

If $\vpi$ is a bounded linear functional on $\fA,$ then $\vpi^*$ is the linear functional given by $\vpi^*(a) = \ol{\vpi(a^*)}$ (\cite{KRI}, p. 255).
It clear that $|\vpi^*(a)| = |\vpi(a)|, \ a $ Hermitian. Furthermore, $\vpi(a) + \vpi^*(a)$ is real-valued if $a \in \fA$ is Hermitian. From Lemma~\ref{l:isotropy} we have
that for $a \in \fA^+, \  |\vpi(a) + \vpi^*(a)| \leq |\vpi(a)| + |\vpi^*(a)| \leq 2\rho_0(a).$ Hence, for $a \in \fA^+, \ 2\rho_0(a) + \vpi(a) + \vpi^*(a) \geq 0.$  Thus
\begin{proposition} \label{p:states and isotropy}
With notation as above, the linear functional $\vpi_0 = \rho_0 + \frac{1}{2}(\vpi + \vpi^*) $ is a state of $\fA$ which extends the state of $C(X), \ f \in C(X) \mapsto f(x_0).$
Thus if $x_0$ has non-trivial isotropy, the pure state of $C(X),\ f \mapsto f(x_0),$ has at least two distinct extensions to states of $\fA.$
\end{proposition}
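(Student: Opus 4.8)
The plan is to read off the two asserted properties of $\vpi_0$ directly from the estimates already assembled before the statement, and then to obtain distinctness of the extensions by a short phase argument. Throughout I write $\hat{a}(x_0,n,x_0) = E(n^*a)(x_0)/\sqrt{n^*n(x_0)}$, and I use that $\rho_0$ is already known to be a state with $\rho_0(a) = E(a)(x_0)$.

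First I would check that $\vpi_0$ is a state. Positivity needs no new work: for $a \in \fA^+$ the inequality $2\rho_0(a) + \vpi(a) + \vpi^*(a) \ge 0$ established just above gives $\vpi_0(a) = \frac{1}{2}(2\rho_0(a) + \vpi(a) + \vpi^*(a)) \ge 0$. For the normalization, since $E$ is a unital $*$-preserving map and $E(n)(x_0) = 0$ by the non-trivial isotropy hypothesis, $\vpi(1) = E(n^*)(x_0)/\sqrt{n^*n(x_0)} = \overline{E(n)(x_0)}/\sqrt{n^*n(x_0)} = 0$, whence $\vpi^*(1) = \overline{\vpi(1)} = 0$ and $\vpi_0(1) = \rho_0(1) = E(1)(x_0) = 1$. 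A positive unital functional on a unital C$^*$-algebra is a state, so $\vpi_0$ is a state.

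Next I would verify that $\vpi_0$ extends $f \mapsto f(x_0)$. The crux is that $\vpi$ kills $C(X)$: for $f \in C(X)$ the bimodule property of $E$ gives $E(n^*f) = E(n^*)f$, so $\vpi(f) = E(n^*)(x_0)f(x_0)/\sqrt{n^*n(x_0)} = 0$; equivalently, $\hat{f}$ is supported on the unit space $G^{(0)}$, whereas the germ $[x_0,n,x_0]$ lies off it. Then $\vpi^*(f) = \overline{\vpi(f^*)} = 0$ as well, and $\vpi_0(f) = \rho_0(f) = E(f)(x_0) = f(x_0)$. The same identity shows $\rho_0$ is itself an extension of the point evaluation, so we have two candidate extensions in hand.

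The main obstacle is the final clause: showing the two extensions are genuinely distinct, i.e. that $\vpi_0 - \rho_0 = \frac{1}{2}(\vpi + \vpi^*)$ does not secretly vanish. Here I would first note $\vpi \neq 0$, since $\vpi(n) = \hat{n}(x_0,n,x_0) = n^*n(x_0)/\sqrt{n^*n(x_0)} = \sqrt{n^*n(x_0)} > 0$ (the positivity using $n^*n(x_0) > 0$, which holds because $[x_0,n,x_0] \in (G,\Si)$). A single evaluation need not settle distinctness, because $\vpi + \vpi^*$ could annihilate $n$; so I would exploit the $\bbT$-action. The element $in$ is again a normalizer with $E(in)(x_0) = 0$ and $[x_0,in,x_0] \in (G,\Si)$, and the relation $\hat{a}(x_0,zn,x_0) = \overline{z}\,\hat{a}(x_0,n,x_0)$ shows that the functional it produces is $-i\vpi$, yielding a second state $\vpi_0' = \rho_0 + \frac{i}{2}(\vpi^* - \vpi)$. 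If both $\vpi_0 = \rho_0$ and $\vpi_0' = \rho_0$ held, then $\vpi + \vpi^* = 0$ and $\vpi^* - \vpi = 0$, forcing $\vpi = 0$ and contradicting $\vpi(n) > 0$. Hence at least one of $\vpi_0, \vpi_0'$ is a state extending $f \mapsto f(x_0)$ that differs from $\rho_0$, which is exactly the claimed conclusion.
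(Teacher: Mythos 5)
Your proposal is correct, and on the first two claims it coincides with the paper's proof: positivity of $\vpi_0$ is read off from the inequality $2\rho_0(a) + \vpi(a) + \vpi^*(a) \geq 0$ established just before the proposition, unitality from $\vpi(1) = \vpi^*(1) = 0,$ and the extension property from $\vpi(f) = \vpi^*(f) = 0$ for $f \in C(X)$ (you supply the bimodule computation $E(n^*f) = E(n^*)f$ that the paper leaves implicit). Where you genuinely diverge is the distinctness claim, and your instinct that ``a single evaluation need not settle distinctness'' is well placed: the paper disposes of this step in one line, asserting that since $\vpi \neq 0$ the states $\rho_0$ and $\vpi_0$ are distinct. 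That inference is too quick, because $\vpi_0 - \rho_0 = \frac{1}{2}(\vpi + \vpi^*)$ is only the ``real part'' of $\vpi$ and can vanish even though $\vpi(n) = \sqrt{n^*n(x_0)} > 0$; for instance, if the normalizer $n$ happens to be skew-adjoint ($n^* = -n$, which can occur, e.g.\ $n = ifu$ in $C([-1,1]) \rtimes \bbZ_2$ for the flip $t \mapsto -t$, with $f$ even and supported near the fixed point $0$), then $\vpi^* = -\vpi$ and $\vpi_0 = \rho_0.$ Your $\bbT$-rotation repair --- replacing $n$ by $in$, noting that all hypotheses persist ($in \in \sN$, $E(in)(x_0) = 0$, $(in)^*(in) = n^*n$, $\al_{in} = \al_n$ fixes $x_0$) so that Lemma~\ref{l:isotropy} and the positivity argument apply verbatim to produce the second state $\vpi_0' = \rho_0 + \frac{i}{2}(\vpi^* - \vpi)$ --- correctly shows that $\vpi_0 = \vpi_0' = \rho_0$ would force $\vpi = 0$, contradicting $\vpi(n) > 0.$ This buys a complete proof of the proposition's final claim (at least two distinct state extensions), at the mild cost that the distinguished second extension may be $\vpi_0'$ rather than $\vpi_0$ itself; equivalently, your argument shows how to patch the paper's statement by choosing the phase $z \in \bbT$ so that the functional built from $zn$ has nonvanishing real part.
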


\begin{proof}
We have shown that $\vpi_0$ is a positive linear functional.  Since $\vpi(1) = \vpi^*(1) = 0,$ it follows that $\vpi_0$ is a state of $\fA.$ Finally, note that both
$\rho_0$ and $\vpi_0$ extend the state of $C(X), \ f \in C(X) \mapsto f(x_0)$ since $\vpi(f) = \vpi^*(f) = 0$ for $f \in C(X).$

Since the linear functional $\vpi$ is not the zero functional, the states $\rho_0, \ \vpi_0$ are distinct.
\end{proof}

Next we prove the converse of Proposition~\ref{p:states and isotropy}.

\begin{proposition} \label{p:converse}
If the state of $C(X), \ f \mapsto f(x_0),$ does not have a unique extension to a state of $\fA,$ then $x_0$ has non-trivial isotropy.
\end{proposition}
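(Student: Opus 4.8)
The plan is to prove the contrapositive: assuming $x_0$ has trivial isotropy, I will show that the state $\rho_0(a) = E(a)(x_0)$ of Lemma~\ref{l:isotropy} is the \emph{only} extension to $\fA$ of the evaluation $f \mapsto f(x_0)$ on $C(X) = \fM$. So I would fix an arbitrary state $\psi$ of $\fA$ with $\psi|_{C(X)} = \mathrm{ev}_{x_0}$ and aim to deduce $\psi = \rho_0$.

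First I would establish a localization principle valid for any such $\psi$. If $h \in C(X)$ with $0 \leq h \leq 1$ and $h(x_0) = 1$, then $\psi(1-h) = 1 - h(x_0) = 0$, and a Cauchy--Schwarz estimate for the positive form $(a,b) \mapsto \psi(b^*a)$ gives $|\psi((1-h)a)|^2 \leq \psi(1-h)\,\psi(a^*(1-h)a) = 0$ for every $a \in \fA$; passing to adjoints yields the analogous statement on the right. Hence $\psi(a) = \psi(hah)$ for all $a$. Since by Urysohn such $h$ may be chosen supported in any prescribed neighborhood of $x_0$, both $\psi$ and $\rho_0$ depend only on the germ of $a$ at $x_0$. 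By Notation~\ref{n:compact support} the span $\fA_c$ of $\sN_c$ is dense in $\fA$ and states are norm-continuous, so it suffices to verify $\psi(n) = \rho_0(n)$ for every normalizer $n \in \sN$.

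Fixing $n$, I would analyze $hnh$, rewritten via Corollary~\ref{c:fn} as $n\,(h\circ\al_n)\,h$, in three cases. If $n^*n(x_0) = 0$, then $\psi(n^*n) = \rho_0(n^*n) = (n^*n)(x_0) = 0$, and Cauchy--Schwarz ($|\psi(n)|^2 \leq \psi(n^*n)$, likewise for $\rho_0$) forces $\psi(n) = \rho_0(n) = 0$. If $x_0 \in \text{dom}(\al_n)$ but $\al_n(x_0) \neq x_0$, I would choose $h$ supported in a neighborhood $U \subseteq \text{dom}(\al_n)$ of $x_0$ small enough (using Hausdorffness of $X$ and continuity of $\al_n$) that $\al_n(U) \cap U = \emptyset$; then $(h\circ\al_n)\,h \equiv 0$, whence $hnh = 0$ and $\psi(n) = 0$, while Lemma~\ref{l:hat n} gives $E(n)(x_0) = 0$, so $\rho_0(n) = 0$ as well. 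Finally, if $\al_n(x_0) = x_0$, trivial isotropy forces $\al_n = \text{id}$ on some neighborhood $U$ of $x_0$; choosing $h$ supported in $U$, the hat transform of $hnh$ vanishes off the diagonal, so $hnh \in C(X)$ with value $h(x_0)^2\,E(n)(x_0) = E(n)(x_0)$ at $x_0$, giving $\psi(n) = \psi(hnh) = E(n)(x_0) = \rho_0(n)$.

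Combining the three cases yields $\psi(n) = \rho_0(n)$ for all $n \in \sN$, hence $\psi = \rho_0$ by density and continuity, which proves the uniqueness and thus the contrapositive. The step I expect to be the main obstacle is the last case: correctly invoking trivial isotropy to pin the germ of $\al_n$ to the identity near $x_0$, and then confirming through the hat-transform formula that $hnh$ is genuinely supported on the unit space, so that any extending state is forced to agree with $\rho_0$ on it. The neighborhood-separation bookkeeping in the moving case also requires care to ensure $U$ stays inside $\text{dom}(\al_n)$.
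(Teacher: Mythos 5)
Your proof is correct and is essentially the paper's own argument in contrapositive form: it rests on the same three ingredients, namely the Cauchy--Schwarz localization $\psi(a)=\psi(hah)$ for a bump function $h$ with $h(x_0)=1$, the covariance relation $fn = n\,f\circ\al_n$ of Corollary~\ref{c:fn}, and the Urysohn-type choice of $h$ separating $x_0$ from $\al_n(x_0)$. The only organizational difference is that where the paper extracts a single witness normalizer with $E(n)(x_0)=0$ and $\vpi(n)\neq 0$ and derives a contradiction in the moving case, you verify $\psi=\rho_0$ on all of $\sN$ by a three-case split, and your extra diagonal case ($\al_n(x_0)=x_0$, with trivial isotropy pinning the germ of $\al_n$ to the identity so that $hnh=nh^2\in C(X)$) is handled correctly.
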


\begin{proof}
Let $\rho_0$ denote the state of $\fA$ given by $\rho_0(a) = E(a)(x_0),$ and let $\vpi$ denote a state of $\fA$ which extends the state of $C(X),\ f \mapsto f(x_0),$ 
and which is distinct from $\rho_0.$

Claim: There exists $n \in \sN,$ with $E(n)(x_0) = 0,$ such that $\vpi(n) \neq 0.$ Suppose not; then for any element $a \in \fA_c, \ a = \sum_{i=0}^N n_i$ with
$n_i \in \sN, N \in \bbN, $ and $E(a) = n_0,$ we have that
\[ \vpi(a) = \sum_{i=0}^N \vpi(n_i) =\vpi(n_0) = n_0(x_0) = E(a)(x_0) = \rho_0(a) \]
so that $\vpi$ and $\rho_0$ coincide on $\fA_c,$ and hence on all of $\fA.$  Since this contradicts the assumption that $\vpi$ and $\rho_0$ are distinct, it must 
then be the case that $\vpi(n) \neq 0$ for some $n \in \sN.$

By the Cauchy-Schwarz inequality, we have that
\[  0 < |\vpi(n)| = |\vpi(1\cdot n)| \leq \sqrt{\vpi(n^*n)} = \sqrt{n^*n(x_0)} \]

Now let $f \in C(X),\ 0 \leq f \leq 1 = f(x_0),$ and observe that
\begin{align*}
 \vpi(n) &= \vpi([f + (1-f)]n[f + (1-f)]) \\
	&= \vpi(fnf) + \vpi(fn(1-f)) + \vpi((1-f)nf) + \vpi((1-f)n(1-f)) \\
	&= \vpi(fnf)
\end{align*}

To see this, we again appeal to Cauchy-Schwarz:
\begin{align*}
 |\vpi(fn(1-f))|  &= \vpi((n^*f)^*(1-f)) \\
	&\leq \sqrt{\vpi((fn)^*(fn))} \sqrt{\vpi((1-f)^2)} \\
	&= \sqrt{\vpi((fn)^*(fn))} \sqrt{(1-f)^2(x_0)} \\
	&= 0 
\end{align*}
Note that the third line makes use of the assumption that $\vpi,$ restricted to $C(X),$ is evaluation at $x_0.$
The other terms, $\vpi((1-f)nf)$ and $\vpi((1-f)n(1-f))$ are treated similarly.

We need to show that $x_0$ has non-trivial isotropy. Suppose not; then $\al_n(x_0) \neq x_0.$
Now since $f \in C(X)$ is only constrained by $ 0 \leq f \leq 1 = f(x_0),$ we can choose $f$ to be supported in a neighborhood of $x_0$ sufficiently small
so that $f(x) f\circ \al_n(x) = 0$ for all $x \in X.$ But then, appealing to Corollary~\ref{c:fn},
\[ 0 \neq \vpi(n) = \vpi(fnf) = \vpi(n f\circ \al_n f) = 0 \]
which is absurd. It follows that $\al_n(x_0) = x_0,$ so that $x_0$ is a point of non-trivial isotropy.
\end{proof}

\begin{corollary} \label{c:unique extension}  Let $x_0 \in X.$  Then the state $f \in C(X) \mapsto f(x_0)$ extends to a unique state of $\fA$ if and only if $x_0$
has trivial isotropy.
\end{corollary}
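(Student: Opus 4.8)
The plan is to obtain this biconditional directly from the two propositions immediately preceding it, Proposition~\ref{p:states and isotropy} and Proposition~\ref{p:converse}, each of which supplies one implication in contrapositive form. No new analysis is needed; the task is purely to assemble the ``if and only if'' correctly from the two one-directional statements already in hand.

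First I would dispose of the direction that uniqueness of the extension forces trivial isotropy. Proposition~\ref{p:states and isotropy} establishes that \emph{if} $x_0$ has non-trivial isotropy, then the pure state $f \mapsto f(x_0)$ on $C(X)$ admits at least two distinct state extensions to $\fA$, namely $\rho_0$ and $\vpi_0 = \rho_0 + \frac{1}{2}(\vpi + \vpi^*)$, where $\vpi(a) = \hat{a}(x_0,n,x_0)$ for a witnessing normalizer $n$ with $E(n)(x_0)=0$. Reading this contrapositively: if the extension to a state of $\fA$ is unique, then $x_0$ cannot have non-trivial isotropy, i.e.\ $x_0$ has trivial isotropy. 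This settles the forward implication.

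For the converse, that trivial isotropy forces uniqueness, I would invoke Proposition~\ref{p:converse}, which asserts that \emph{if} the state $f \mapsto f(x_0)$ fails to have a unique extension, then $x_0$ has non-trivial isotropy. Taking the contrapositive once more yields exactly the statement that trivial isotropy of $x_0$ implies the extension to a state of $\fA$ is unique. Combining the two contrapositives gives the claimed equivalence between uniqueness of the state extension and triviality of the isotropy at $x_0$.

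I do not expect any genuine obstacle here: all the substantive work—the Cauchy--Schwarz estimate bounding $|\vpi(a)|$ by $\rho_0(a)$ on $\fA^+$ in Lemma~\ref{l:isotropy}, the verification that $\vpi_0$ is a state, and the localization argument using Corollary~\ref{c:fn} to produce the witnessing normalizer in the converse—has already been carried out in the two propositions. The only point requiring attention is bookkeeping of the logical form, ensuring that the ``at least two distinct extensions'' conclusion of Proposition~\ref{p:states and isotropy} is correctly negated as ``not unique,'' so that the two contrapositives dovetail into the single biconditional.
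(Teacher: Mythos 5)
Your proposal is correct and matches the paper exactly: the paper states Corollary~\ref{c:unique extension} without a separate proof, precisely because it is the conjunction of the contrapositives of Proposition~\ref{p:states and isotropy} and Proposition~\ref{p:converse}, as you assemble it (with existence of an extension always supplied by $\rho_0(a) = E(a)(x_0)$).
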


\begin{corollary} \label{c:irreducible rep}
If $(\fA, \fM)$ is a Cartan pair, and $x_0$ a point of $X = \hat{\fM}$ with trivial isotropy, then
\begin{itemize}
\item[(i)] The pure state $f \in \fM \mapsto f(x_0)$ of $\fM$ has a unique extension $\rho_0$  to a pure state of $\fA.$
\item[(ii)] The GNS representation $\pi_0$ constructed from $\rho_0$ is irreducible.
\end{itemize}
\end{corollary}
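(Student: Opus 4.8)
The plan is to prove (i) and (ii) together, exploiting the standard link between unique state extension, purity, and irreducibility of the GNS representation, with Corollary~\ref{c:unique extension} doing the real work. First, for (i), I would observe that the prescribed formula already furnishes an extension: define $\rho_0(a) = E(a)(x_0)$ and note that $\rho_0$ is a genuine state of $\fA$, being the composition of the conditional expectation $E\colon \fA \to \fM$ (which is positive and, since $\fA$ is unital with $1 \in \fM$, unital) with the evaluation functional $f \mapsto f(x_0)$, itself a pure state of the commutative C$^*$-algebra $\fM = C(X)$. Since $\rho_0$ restricts on $\fM$ to that evaluation, it is an extension of the state $f \mapsto f(x_0)$. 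As $x_0$ has trivial isotropy, Corollary~\ref{c:unique extension} guarantees that $f \mapsto f(x_0)$ has a \emph{unique} state extension to $\fA$; hence $\rho_0$ is precisely that unique extension.

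Next I would establish that $\rho_0$ is pure by invoking the elementary fact that the unique state extension of a pure state is automatically pure. Concretely, suppose $\rho_0 = t\psi_1 + (1-t)\psi_2$ with $0 < t < 1$ and $\psi_1, \psi_2$ states of $\fA$. Restricting to $\fM$ yields the convex decomposition $(f \mapsto f(x_0)) = t\,\psi_1|_{\fM} + (1-t)\,\psi_2|_{\fM}$, and since evaluation at $x_0$ is a pure state of $\fM$, both restrictions $\psi_i|_{\fM}$ must coincide with it. Thus each $\psi_i$ is itself a state extension of $f \mapsto f(x_0)$, and the uniqueness from Corollary~\ref{c:unique extension} forces $\psi_1 = \psi_2 = \rho_0$. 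Therefore $\rho_0$ is extreme in the state space of $\fA$, i.e.\ pure, which completes (i).

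Part (ii) then follows immediately from classical GNS theory: the GNS representation associated with a state is irreducible if and only if that state is pure. Since (i) supplies the purity of $\rho_0$, the representation $\pi_0$ is irreducible.

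I do not expect a serious obstacle, as the corollary is essentially a repackaging of Corollary~\ref{c:unique extension} together with two textbook facts (the restriction of a state to a unital C$^*$-subalgebra is a state, and the unique extension of a pure state is pure). The only points needing a line of verification are that $\rho_0$ genuinely is a state extending $f \mapsto f(x_0)$ — which reduces to $E$ being unital and positive, immediate from the Cartan-pair hypotheses — and that evaluation at $x_0$ is pure on $\fM$, which holds because it is a character of $C(X)$.
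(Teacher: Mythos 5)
Your proposal is correct and takes essentially the same route as the paper: uniqueness comes from Corollary~\ref{c:unique extension}, purity from the standard fact that the unique state extension of a pure state is pure, and irreducibility from GNS theory. The only difference is that you spell out the convexity argument for purity, which the paper simply cites as a known fact.
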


\begin{proof}
The first statement follows from Corollary~\ref{c:unique extension}, since if a pure state has a unique extension to a state of $\fA,$ that extension is  pure.
The second statement follows from the fact that the GNS representation constructed from a pure state is irreducible.
\end{proof}

In \cite{Kum86} Kumjian defined the unique extension property: if $(\fA, \fM = C(X))$ is a Cartan pair, then the unique extension property holds if each pure state of $C(X)$
extends uniquely to a pure state of $\fA.$ Kumjian showed that if the corresponding groupoid $G$ is principle, then the unique extension property holds. In \cite{JRen08},
Proposition 5.8, Renault proved the converse. These results follow from Corollary~\ref{c:unique extension}. Indeed, every point $x \in X$ extends uniquely to a state of $\fA$
if and only if there is no point of non-trivial isotropy, which is to say that the groupoid $G$ is principal. In particular, Corollary~\ref{c:unique extension} implies Proposition 5.8 of \cite{JRen08}.

\begin{lemma} \label{l:ker pi0}
Let $x_0, \ \rho_0, \ \pi_0$ be as in Corollary~\ref{c:irreducible rep}
For $a \in \fA,\ a \in \text{ker}(\pi_0)$ if and only if $\hat{a}(z, n, y) = 0$ for all $y \in \sG(x_0)$ and $n \in \sN$ such that $y \in \text{dom}(\al_n).$
\end{lemma}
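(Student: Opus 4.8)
The plan is to translate membership in $\ker\pi_0$ into a vanishing condition on the conditional expectation $E$, and then use the interplay between $E$ and the partial homeomorphisms $\al_m$ to push that condition out from the single point $x_0$ along its pseudoorbit. First I would unwind the GNS construction. Since $\pi_0$ is associated with $\rho_0$, the space $H_{\pi_0}$ is the completion of $\fA$ under $\langle b, b'\rangle = \rho_0\big((b')^*b\big)$, with $\pi_0(a)[b] = [ab]$. As $\pi_0(a)$ is bounded, $a \in \ker\pi_0$ iff every matrix coefficient vanishes, that is, $\rho_0\big((b')^*a b\big) = E\big((b')^*a b\big)(x_0) = 0$ for all $b,b' \in \fA$. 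Since $\fA_c$, the linear span of $\sN_c$, is dense (Notation~\ref{n:compact support}) and $E$, multiplication, and evaluation at $x_0$ are continuous, this reduces to
\[ E\big((n')^*a\,n\big)(x_0) = 0 \qquad \text{for all } n, n' \in \sN. \qquad (\dagger) \]

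The bridge between $(\dagger)$ and the pseudoorbit is the identity
\[ E\big((\tilde n m)^*a\,m\big)(x_0) \;=\; E(\tilde n^*a)\big(\al_m(x_0)\big)\; m^*m(x_0), \qquad \tilde n, m \in \sN, \qquad (\star) \]
which I would obtain by applying Lemma~\ref{l:E(n^*an)} to $c = \tilde n^*a$, giving $E(m^*c\,m) = m^*E(c)m$, together with the defining relation $m^*h m = (h\circ\al_m)\,m^*m$ for $h = E(\tilde n^*a) \in \fM$, and then evaluating at $x_0$. The left side of $(\star)$ is an instance of the two-sided expression in $(\dagger)$, while the right side exhibits the one-sided expression $E(\tilde n^*a)$ — equivalently, up to the factor $\sqrt{\tilde n^*\tilde n}$, the value $\hat a(\,\cdot\,,\tilde n,\al_m(x_0))$ — evaluated at the pseudoorbit point $\al_m(x_0)$.

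For the forward implication, assume $(\dagger)$. Given $y \in \sG(x_0)$, write $y = \al_m(x_0)$ with $x_0 \in \text{dom}(\al_m)$, so $m^*m(x_0) > 0$. For any $\tilde n \in \sN$ the left side of $(\star)$ is $E\big((n')^*a\,n\big)(x_0)$ with $n' = \tilde n m$ and $n = m$, hence vanishes by $(\dagger)$; dividing $(\star)$ by $m^*m(x_0)$ gives $E(\tilde n^*a)(y) = 0$, i.e. $\hat a(z,\tilde n,y) = 0$ whenever $y \in \text{dom}(\al_{\tilde n})$. Conversely, assume $\hat a$ vanishes along the pseudoorbit and fix $n,n' \in \sN$. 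If $n^*n(x_0) = 0$, then $E\big((n')^*a\,n\big)(x_0) = \rho_0\big(((n')^*a)\,n\big) = 0$ by Cauchy--Schwarz, since $\rho_0(n^*n) = n^*n(x_0) = 0$. If $n^*n(x_0) > 0$, set $\tilde n = n' n^* \in \sN$ and $y = \al_n(x_0) \in \sG(x_0)$; as $n^*n \in \fM$ and $E$ is an $\fM$-bimodule map, the left side of $(\star)$ with $m = n$ equals $n^*n(x_0)\,E\big((n')^*a\,n\big)(x_0)$, while its right side is $E(\tilde n^*a)(y)\,n^*n(x_0) = 0$. Dividing by $n^*n(x_0)$ yields $(\dagger)$.

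I expect the main technical point to be the degenerate cases, where a source point leaves the relevant domain and $(\star)$ collapses. These are handled uniformly by Cauchy--Schwarz applied to the states $b \mapsto E(b)(y)$ of $\fA$: whenever $\tilde n^*\tilde n(y) = 0$ one has $|E(\tilde n^*a)(y)|^2 \le E(\tilde n^*\tilde n)(y)\,E(a^*a)(y) = 0$, so the hypothesis $\hat a(z,\tilde n,y) = 0$ — stated only for $y \in \text{dom}(\al_{\tilde n})$ — in fact forces $E(\tilde n^*a)(y) = 0$ for every $\tilde n \in \sN$ and every $y \in \sG(x_0)$. This is precisely what legitimizes the assertion that the right side of $(\star)$ vanishes in the converse, regardless of whether $y$ lies in $\text{dom}(\al_{\tilde n})$. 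Once this observation and $(\star)$ are in place, both implications reduce to short divisions.
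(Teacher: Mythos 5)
Your proof is correct, and while it shares the paper's skeleton (GNS reduction, density of $\fA_c$ so that it suffices to test against normalizers, and Lemma~\ref{l:E(n^*an)} to transport evaluation at $x_0$ along the pseudoorbit), the key mechanism is genuinely different. The paper works with the one-sided, quadratic condition: $\pi_0(a)=0$ iff $\pi_0(a)[n]=0$ for all $n\in\sN$, i.e. $\rho_0(n^*a^*an)=0$, which by Lemma~\ref{l:E(n^*an)} becomes $E(a^*a)(\al_n(x_0))\,n^*n(x_0)=0$; thus $a\in\ker\pi_0$ iff $E(a^*a)$ vanishes on $\sG(x_0)$, and the passage from there to pointwise vanishing of $\hat a$ is made by the convolution expansion at units,
\[ \wh{a^*a}(y,1,y)=\sum_{[z,n,y]\in s^{-1}(y)}|\hat a(z,n,y)|^2, \]
a sum of squares that disposes of all normalizers over the fiber simultaneously. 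You instead test $a$ bilinearly against pairs of normalizers (your condition $(\dagger)$, justified since $a\in\ker\pi_0$ iff all matrix coefficients vanish) and replace the fiber expansion by the covariance identity $(\star)$ --- itself just Lemma~\ref{l:E(n^*an)} combined with $m^*hm=(h\circ\al_m)\,m^*m$ --- applied one normalizer at a time, with Cauchy--Schwarz for the states $b\mapsto E(b)(y)$ handling the degenerate points where the source leaves $\text{dom}(\al_{\tilde n})$. What your route buys: it never forms the infinite sum over $s^{-1}(y)$, whose convergence for general $a\in\fA$ (as opposed to $a\in\fA_c$) the paper treats formally, and your degenerate-case observation makes explicit something the statement's restriction to $y\in\text{dom}(\al_n)$ quietly relies on, namely that $E(\tilde n^*a)(y)=0$ automatically whenever $\tilde n^*\tilde n(y)=0$. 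What the paper's route buys: positivity does all the work at once --- no case analysis, no two-index bookkeeping, and the intermediate characterization ``$a\in\ker\pi_0$ iff $E(a^*a)$ vanishes on $\sG(x_0)$'' is reused almost verbatim in Lemma~\ref{l:E0 faithful}, which your bilinear formulation would not hand over as directly.
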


\begin{proof}
$\pi_0(a) = 0$ if and only if $\pi_0(a)\xi = 0$ for all $\xi \in H_0,$ the Hilbert space associated to $\pi_0$ by the GNS construction. If $b \to [b]$ is the 
canonical map of $\fA$ into $H_0,$
since the image is dense, it is enough that $\pi_0(a)[b] = 0 $ for all $b \in \fA.$  But since $\sN$ generates a dense linear subspace of $\fA,$ it is enough to know that
$\pi_0(a)[n] = 0$ for all $n \in \sN.$  But, $\pi_0(a)[n] = 0$ if and only if $\rho_0(n^*a^*an) = 0.$ By Lemma~\ref{l:E(n^*an)} and the definition of $\rho_0,$ this is
$n^*E(a^*a)n(x_0) = E(a^*a)(\al_n(x_0)) n^*n(x_0),$ which vanishes if and only if $E(a^*a)$ vanishes on the pseudo-orbit of $x_0.$ Finally, that is the case if and only if
\[\wh{a^*a}(y, 1, y) = 0  \text{ for all } y \in \sG(x_0) .\]

Now,
\begin{align*}
\wh{a^*a}(y, 1, y) &= \sum_{[z, n, y] \in s^{-1}(y)} \wh{a^*}(y, n^*, z)\, \hat{a}(z, n, y) \\
	&= \sum_{[z, n, y] \in s^{-1}(y)} |\hat{a}(z, n, y)|^2
\end{align*}
since $\wh{a^*}(y, n^*, z) = \ol{\hat{a}(z, n, y)}.$
Thus, $a \in \text{ker}(\pi_0)$ if and only if $\hat{a}(z, n, y) = 0$ for all $y \in \sG(x_0), \ n \in \sN$ such that $y \in \text{dom}(\al_n).$
\end{proof}

\begin{corollary} \label{c:ker pi0 cap C(X)}
For $f \in C(X), \ f \in \text{ker}(\pi_0)$ if and only if $f$ vanishes on the pseudo-orbit $\sG(x_0).$
Furthermore, if $a \in \text{ker}(\pi_0),$ then $E(a) \in \text{ker}(\pi_0).$
\end{corollary}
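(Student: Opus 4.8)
The plan is to derive both assertions directly from Lemma~\ref{l:ker pi0}, which already characterizes $\text{ker}(\pi_0)$ through the vanishing of $\hat{a}$ along the pseudo-orbit $\sG(x_0)$; almost all of the work is done there, and what remains is bookkeeping about how functions in $C(X)$ and the conditional expectation $E$ sit inside that characterization.

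For the first assertion I would specialize Lemma~\ref{l:ker pi0} to an element $f \in C(X)$. Since such an $f$ is supported on the unit space $G^{(0)}$, the section $\hat{f}(z, n, y)$ vanishes unless $[z, n, y] \in G^{(0)}$, that is, unless $z = y$ and the germ of $\al_n$ at $y$ is trivial; in that case the defining formula $\hat{a}(y, n, x) = E(n^*a)(x)/\sqrt{n^*n(x)}$ reduces to $\hat{f}(y, 1, y) = E(f)(y) = f(y)$. Consequently the criterion of Lemma~\ref{l:ker pi0}, namely that $\hat{f}(z, n, y) = 0$ for all $y \in \sG(x_0)$ and all $n \in \sN$ with $y \in \text{dom}(\al_n)$, collapses to the single requirement that $f(y) = 0$ for every $y \in \sG(x_0)$. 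This is precisely the statement that $f$ vanishes on the pseudo-orbit.

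For the second assertion I would apply Lemma~\ref{l:ker pi0} to an arbitrary $a \in \text{ker}(\pi_0)$ with the particular choice $n = 1$. Since $1 \in \sN$ and $\al_1$ is the identity with domain all of $X$, every $y \in \sG(x_0)$ lies in $\text{dom}(\al_1)$, so the lemma forces $\hat{a}(y, 1, y) = E(a)(y) = 0$ for all $y \in \sG(x_0)$. Thus $E(a)$, which belongs to $C(X)$, vanishes on $\sG(x_0)$, and the first assertion immediately gives $E(a) \in \text{ker}(\pi_0)$.

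The only points requiring care are the correct identification of $\hat{f}$, for $f \in C(X)$, as being supported on the unit space with value $f$ there, and the observation that the trivial normalizer $n = 1$ (with $\al_1 = \text{id}$) is an admissible choice in Lemma~\ref{l:ker pi0}. Beyond verifying these normalizations I do not expect any genuine obstacle.
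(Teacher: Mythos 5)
Your proposal is correct and matches the paper's intent: the paper states Corollary~\ref{c:ker pi0 cap C(X)} with no separate proof, treating it as an immediate consequence of Lemma~\ref{l:ker pi0}, and your argument (specializing the lemma to $f \in C(X)$ supported on the unit space, then taking $n = 1$ to get $\hat{a}(y,1,y) = E(a)(y) = 0$ on $\sG(x_0)$ and feeding this back into the first assertion) is exactly the bookkeeping the paper leaves to the reader. The two normalizations you flag --- that $\hat{f}$ for $f \in C(X)$ is supported on classes with trivial germ, where its value is $f(y)$ up to a nonzero phase (via Lemma~\ref{l:hat n}), and that $1 \in \sN$ with $\al_1 = \mathrm{id}$ on all of $X$ --- are precisely the right points to check, and both hold.
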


For the remainder of this section, we will assume that $x_0, \ \rho_0, \ \pi_0$ are as in Corollary~\ref{c:irreducible rep}.

\begin{proposition} \label{p:pi0M is a masa}
$\pi_0(\fM)$ is a masa in $\pi_0(\fA).$
\end{proposition}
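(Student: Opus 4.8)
The plan is to build an explicit orthonormal-basis model for the GNS space $H_0$ in which $\pi_0(\fM)$ acts diagonally, and then to read off maximality from two facts: that $C(X)$ separates points of the pseudoorbit, and that $E$ interacts with normalizers via Lemma~\ref{l:E(n^*an)}. For each $y \in \sG(x_0)$ I would choose a normalizer $n_y \in \sN$ with $\al_{n_y}(x_0) = y$, normalized so that $\rho_0(n_y^* n_y) = (n_y^* n_y)(x_0) = 1$, and set $e_y := [n_y] \in H_0$. The decisive input is the trivial-isotropy hypothesis: if $\al_n(x_0) = \al_m(x_0) = y$, then $[x_0, m^*n, x_0]$ lies in the isotropy group at $x_0$, hence in the unit space, so $\al_m = \al_n$ near $x_0$ and the classes $[n],[m]$ are proportional. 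Combined with $\langle e_y, e_{y'}\rangle = E(n_{y'}^* n_y)(x_0)$, which by Lemma~\ref{l:hat n} vanishes when $y \neq y'$, this shows that $\{e_y : y \in \sG(x_0)\}$ is an orthonormal basis of $H_0$; and Corollary~\ref{c:fn} gives $\pi_0(f)e_y = f(y)e_y$ for $f \in \fM$, so $\pi_0(\fM)$ consists of operators diagonal in this basis.

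Granting that $\pi_0(\fM)$ is abelian (being the image of an abelian algebra), I would next show it is maximal abelian in the C$^*$-algebra $\pi_0(\fA)$. Suppose $T = \pi_0(a)$ commutes with every $\pi_0(f)$, $f \in \fM$. Taking matrix coefficients of $T\pi_0(f) = \pi_0(f)T$ against $e_y$ and $e_{y'}$ yields
\[ f(y)\,\langle Te_y, e_{y'}\rangle = f(y')\,\langle Te_y, e_{y'}\rangle \quad \text{for all } f \in C(X). \]
Since distinct points of $\sG(x_0) \subset X$ are separated by $C(X)$, it follows that $\langle Te_y, e_{y'}\rangle = 0$ whenever $y \neq y'$; that is, $T$ is diagonal with respect to the basis $\{e_y\}$.

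It then remains to identify a diagonal element of $\pi_0(\fA)$ with an element of $\pi_0(\fM)$. For this I would set $b := a - E(a)$, so $E(b) = 0$. Since $\pi_0(E(a))$ is diagonal by the first paragraph and $T$ is diagonal by the second, $\pi_0(b) = T - \pi_0(E(a))$ is diagonal as well. Its diagonal entries are, up to normalization, $\rho_0(n_y^* b\, n_y) = E(n_y^* b\, n_y)(x_0)$, which by Lemma~\ref{l:E(n^*an)} equals $(n_y^* E(b)\, n_y)(x_0) = 0$. Hence $\pi_0(b) = 0$, so $T = \pi_0(E(a)) \in \pi_0(\fM)$, establishing maximality.

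I expect the main obstacle to be the first paragraph: rigorously setting up the orthonormal-basis model of $H_0$ and the diagonal action of $\fM$, since this is exactly where the trivial-isotropy hypothesis and the twist must be handled with care. In particular, the fiber of $\Si$ over each orbit point is a $\bbT$-torsor, so one must verify that a different choice of $n_y$ alters $e_y$ only by a unimodular scalar and therefore leaves the notion of ``diagonal'' intact. Once this model is in hand, the remaining steps are short computations driven by Lemma~\ref{l:E(n^*an)} and the point-separation property of $C(X)$.
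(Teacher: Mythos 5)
Your proof is correct, but it takes a genuinely different route from the paper's. The paper argues directly from the kernel criterion of Lemma~\ref{l:ker pi0}: if $\pi_0(a)$ commutes with $\pi_0(\fM)$, then for every $f\in C(X)$ the coefficient function of $fa-af$ vanishes over the pseudoorbit, which factors as $\hat{a}(\al_n(y),n,y)\,[f\circ\al_n(y)-f(y)]=0$; trivial isotropy along the whole pseudoorbit (Lemma~\ref{l:isotropy preserved}) then forces $\hat{a}$ to vanish off the unit space, so $a-E(a)\in\text{ker}(\pi_0)$ --- no model of $H_0$ is needed. You instead build the orthonormal-basis model of $H_0$ first and argue spatially; this is exactly the machinery the paper defers to section~\ref{s:MASAS and reps} (Lemma~\ref{l:H0}, Lemma~\ref{l:equivalent normalizers}, Corollary~\ref{c:not sim}, Lemma~\ref{l:on basis}), where it is used for the stronger Corollary~\ref{c:fM is a masa}. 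Note that your matrix-coefficient argument for diagonality never uses $T\in\pi_0(\fA)$, so together with the double commutant theorem it essentially recovers Corollary~\ref{c:fM is a masa} as well; membership in $\pi_0(\fA)$ enters only in your last step, where Lemma~\ref{l:E(n^*an)} kills the diagonal of $\pi_0(a-E(a))$ --- a clean substitute for the paper's pointwise computation. The step you rightly flag as delicate is fine: if $\al_n(x_0)=\al_m(x_0)$, trivial isotropy at $x_0$ (which transfers along the orbit by Lemma~\ref{l:isotropy preserved}) forces the germs of $\al_n$ and $\al_m$ to agree at $x_0$, so $[y,n,x_0]$ and $[y,m,x_0]$ lie in the same $\bbT$-fiber of $\Si$, and the argument of Lemma~\ref{l:equivalent normalizers} supplies $h_1,h_2\in C(X)$ nonvanishing at $x_0$ with $nh_1=mh_2$; after the normalization $n_y^*n_y(x_0)=1$ the ambiguity in $e_y$ is a unimodular scalar, which the notion of ``diagonal'' does not see. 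The trade-off: the paper's proof is shorter and self-contained at that point in the text, while yours buys the explicit spatial picture (and, in effect, Corollary~\ref{c:fM is a masa}) in one stroke, at the cost of developing the basis model up front.
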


\begin{proof}
Let $a \in \fA$ be such that $\pi_0(a)$ commutes with $\pi_0(\fM).$ Then for any $f \in \fM = C(X), \ fa - af \in \text{ker}(\pi_0).$
By Lemma~\ref{l:ker pi0}, \ $fa - af \in \text{ker}(\pi_0)$ if and only if, for all $y \in \sG(x_0)$ and for all $n \in \sN$ with
$y \in \text{dom}(\al_n), \ (\wh{fa} - \wh{af})(x, n, y) = 0 \  (x = \al_n(y)).$
Let $n \in \sN.$ Computing
\begin{align*}
0 &= (\wh{fa})(x, n, y) - (\wh{af})(x, n, y) \\
	&= f(x)\hat{a}(x, n, y) - \hat{a}(x, n, y) f(y) \\
	&= \hat{a}(\al_n(y), n, y)[f\circ \al_n(y) - f(y)]
\end{align*}
Suppose $n \in \sN$ and $ E(n) = 0.$ 
Since $x_0$ has trivial isotropy, the same holds for $y \in \sG(x_0).$ (Lemma~\ref{l:isotropy preserved})  Hence $\al_n(y) \neq y.$ So for some $f \in C(X), \ f(y) \neq f(\al_n(y)).$  Thus, 
$\hat{a}(\al_n(y), n, y) = 0$ for all $y \in \sG(x_0), \ n \in \sN$ such that $E(n) = 0.$  

Furthermore, if $n = h \in C(X), \ \hat{a}(y, h, y) = \wh{E(a)}(y, h, y),$ since the conditional expectation $E(a) $ is defined by restriction of $a$ to the unit space.
Thus, $(\hat{a} - \wh{E(a)})(x, n, y) = 0$ for all $n \in \sN, \ y \in \sG(x_0) \cap \text{dom}(\al_n).$
It follows from Lemma~\ref{l:ker pi0} that $a - E(a) \in \text{ker}(\pi_0),$ or equivalently, $\pi_0(a) = \pi_0(E(a)).$  In other words, $\pi_0(\fM)$ is a masa in $\pi_0(\fA).$

\end{proof}

We aim to show not only is $\pi_0(\fM)$ a masa in $\pi_0(\fA),$ but that it is a Cartan subalgebra. Recall that a masa is \emph{regular} if the set of its
normalizers generate the C$^*$-algebra.

\begin{lemma} \label{l:pi0M is regular}
$\pi_0(\fM)$ is regular in $\pi_0(\fA).$
\end{lemma}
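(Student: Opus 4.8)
The plan is to transport the regularity of $\fM$ in $\fA$ through the representation $\pi_0$, exploiting the elementary facts that a $*$-homomorphism carries normalizers to normalizers and carries a generating set to a generating set of its image. Since $\pi_0(\fM)$ is already known to be a masa in $\pi_0(\fA)$ by Proposition~\ref{p:pi0M is a masa}, the only thing left to check for regularity (Definition~\ref{d:regular}) is that the normalizers of $\pi_0(\fM)$ generate $\pi_0(\fA)$ as a C$^*$-algebra.

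First I would verify that for each $n \in \sN$ the image $\pi_0(n)$ normalizes $\pi_0(\fM)$. Because $\pi_0$ is a $*$-homomorphism and $n^*\fM n \subset \fM$, $\,n\fM n^* \subset \fM$ (Definition~\ref{d:normalizer}), we obtain $\pi_0(n)^*\pi_0(\fM)\pi_0(n) = \pi_0(n^*\fM n) \subset \pi_0(\fM)$ and likewise $\pi_0(n)\pi_0(\fM)\pi_0(n)^* \subset \pi_0(\fM)$. Thus every nonzero $\pi_0(n)$ is a normalizer of $\pi_0(\fM)$ in $\pi_0(\fA)$. Next I would invoke the hypothesis that $(\fA, \fM)$ is a Cartan pair, so that $\fM$ is regular and $\sN$ generates $\fA$ as a C$^*$-algebra. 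The general fact I would use is that if $\sN$ generates $\fA$ then $\pi_0(\sN)$ generates $\pi_0(\fA)$: indeed, the preimage under $\pi_0$ of the C$^*$-algebra $C^*(\pi_0(\sN))$ is a C$^*$-subalgebra of $\fA$ containing $\sN$, hence equals $\fA$, so $\pi_0(\fA) \subset C^*(\pi_0(\sN))$, while the reverse inclusion is immediate. Combining the two steps, the normalizers of $\pi_0(\fM)$ in $\pi_0(\fA)$ contain the generating set $\{\pi_0(n) : n \in \sN\}$, and therefore generate $\pi_0(\fA)$; this is exactly the assertion that $\pi_0(\fM)$ is regular.

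There is essentially no analytic obstacle here, as the content is purely algebraic. The single point requiring a word of care is that some images $\pi_0(n)$ may vanish, so that $\{\pi_0(n):n\in\sN\}$ is not literally a set of normalizers in the strict sense of Definition~\ref{d:normalizer}, which excludes $0$. This causes no difficulty: zero elements are irrelevant to the C$^*$-algebra generated, and after discarding them the remaining nonzero images are genuine normalizers of $\pi_0(\fM)$ whose generated C$^*$-algebra is still all of $\pi_0(\fA)$.
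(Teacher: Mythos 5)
Your proposal is correct and follows essentially the same route as the paper: the paper's proof likewise observes that $\pi_0(n)^*\pi_0(\fM)\pi_0(n) = \pi_0(n^*\fM n) \subset \pi_0(\fM)$ (and similarly with $n$ and $n^*$ interchanged), and that $\pi_0(\sN)$ generates $\pi_0(\fA)$ because $\sN$ generates $\fA$. Your additional remarks (the preimage argument for generation, and discarding any $\pi_0(n)$ that vanish) are fine elaborations of details the paper leaves implicit.
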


\begin{proof}
Since the set $\sN$ of normalizers of $\fM$ generate $\fA,$ it follows that $\pi_0(\sN) := \{ \pi_0(n): n \in \sN \}$ generates $\pi_0(\fA).$ Thus it is enough to show that
if $n \in \sN, \  \pi_0(n)$ normalizes the masa $\pi_0(\fM).$ 

\[ \pi_0(n)^* \pi_0(\fM) \pi_0(n) = \pi_0(n^*\fM n) \subset \pi_0(\fM) \]
and similarly $\pi_0(n) \pi_0(\fM) \pi_0(n)^* \subset \pi_0(\fM).$
\end{proof}

Next we define a conditional expectation on $\pi_0(\fA).$ Define $E_0 : \pi_0(\fA) \to \pi_0(\fM)$ by $E_0(\pi_0(a)) = \pi_0(E(a)).$
We need to show this is well-defined. But if $\pi_0(a) = 0,$ then by Lemma~\ref{l:ker pi0}, \ $ \hat{a}(z, n, y) = 0 $ for all $y \in \text{dom}(\al_n)$ such
that $y \in \sG(x_0).$ In particular, taking $n = 1,$ we have that $\hat{a}(y, 1, y) = 0$ for all $y \in \sG(x_0).$  But by definition, $E(a)$ is the restriction of $a$
to the unit space, so that $E(a)(y) = 0$ for all $y \in \sG(x_0).$ But then by Corollary~\ref{c:ker pi0 cap C(X)}, \  $E(a) \in \text{ker}(\pi_0).$  Thus $E_0$ is well-defined
on $\pi(\fA).$

 That $E_0(ca) = cE_0(a) $ and $E_0(ac) = E_0(a)c $ for $a \in \pi_0(\fA), \ c \in \pi_0(\fM)$ are immediate. Thus $E_0$ is a conditional expectation on $\pi_0(\fA).$
It remains to show that the conditional expectation is faithful.

\begin{lemma} \label{l:E0 faithful}
The conditional expectation $E_0: \pi_0(\fA) \to \pi_0(\fM)$ defined above is faithful.
\end{lemma}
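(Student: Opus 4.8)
The plan is to reduce faithfulness directly to the kernel characterization already established in Lemma~\ref{l:ker pi0}. Recall that $E_0$ is faithful precisely when, for every positive element $c \in \pi_0(\fA),$ the condition $E_0(c) = 0$ forces $c = 0.$ Since every positive element of $\pi_0(\fA)$ has the form $c = \pi_0(a)^*\pi_0(a) = \pi_0(a^*a)$ for some $a \in \fA,$ it suffices to show that $E_0(\pi_0(a^*a)) = 0$ implies $\pi_0(a^*a) = 0,$ equivalently $\pi_0(a) = 0.$

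First I would unwind the hypothesis using the definition of $E_0.$ We have $E_0(\pi_0(a^*a)) = \pi_0(E(a^*a)),$ so the assumption $E_0(\pi_0(a^*a)) = 0$ says precisely that $E(a^*a) \in \text{ker}(\pi_0).$ Since $E(a^*a) \in C(X),$ Corollary~\ref{c:ker pi0 cap C(X)} then tells us that $E(a^*a)$ vanishes on the pseudo-orbit $\sG(x_0).$

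The key link is the pointwise identity from the proof of Lemma~\ref{l:ker pi0}: because $E(b)$ is the restriction of $b$ to the unit space, for each $y \in \sG(x_0)$ we have
\[ E(a^*a)(y) = \wh{a^*a}(y, 1, y) = \sum_{[z, n, y] \in s^{-1}(y)} |\hat{a}(z, n, y)|^2 . \]
Thus for each $y \in \sG(x_0)$ the hypothesis makes this sum of nonnegative terms equal to zero, which forces $\hat{a}(z, n, y) = 0$ for every $n \in \sN$ with $y \in \text{dom}(\al_n).$ This is exactly the condition in Lemma~\ref{l:ker pi0} characterizing membership in $\text{ker}(\pi_0),$ so $a \in \text{ker}(\pi_0),$ i.e. $\pi_0(a) = 0,$ and hence $c = \pi_0(a^*a) = 0.$

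I do not expect a genuine obstacle here: the substance of faithfulness is already encoded in the equivalence, proved inside Lemma~\ref{l:ker pi0}, between $E(a^*a)$ vanishing on $\sG(x_0)$ and $a$ lying in $\text{ker}(\pi_0).$ The only points requiring mild care are the reduction to positive elements of the form $\pi_0(a^*a)$ and the observation that $E_0(\pi_0(a^*a)) = \pi_0(E(a^*a)),$ both of which are routine.
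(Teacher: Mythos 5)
Your proposal is correct and follows essentially the same route as the paper's own proof: unwind $E_0(\pi_0(a^*a)) = \pi_0(E(a^*a))$, identify $E(a^*a)(y)$ with $\wh{a^*a}(y,1,y) = \sum_{[z,n,y] \in s^{-1}(y)} |\hat{a}(z,n,y)|^2$, and invoke the kernel characterization of Lemma~\ref{l:ker pi0} to conclude $a \in \text{ker}(\pi_0)$. Your explicit remark that every positive element of $\pi_0(\fA)$ has the form $\pi_0(a^*a)$ is a small tidying of a step the paper leaves implicit, but the argument is the same.
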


\begin{proof}
Suppose, for $a \in \fA, \ E_0(\pi_0(a^*a)) = 0.$ By definition, $E_0(\pi_0(a^*a)) = \pi_0(E(a^*a)).$ Now $E(a^*a)$ is the restriction of $\wh{a^*a}$ to the unit space, and
$\wh{E(a^*a})$ is in the kernel of $\pi_0$ if and only if $E(\wh{a^*a})$ vanishes on the pseudo-orbit $\sG(x_0).$ 
 Thus, $E(a^*a) $ is in the kernel of $\pi_0$ if and only if $\wh{a^*a}(y, 1, y) = 0$ for all $y \in \sG(x_0).$ By the same calculation as in Lemma~\ref{l:ker pi0},
\[ \wh{a^*a}(y, 1, y) = \sum_{[x, n, y] \in s^{-1}(y)} |\hat{a}(x, n, y)|^2 \]
Thus, $\hat{a}(x, n, y) = 0$ for all $[x, n, y] \in s^{-1}(y)$ and for all $y \in \sG(x_0).$  Again by Lemma~\ref{l:ker pi0}, $a \in \text{ker}(\pi_0).$
It follows that $E_0$ is faithful.
\end{proof}

Recall the definition of a Cartan pair (\ref{d:Cartan}).  We have shown
\begin{corollary} \label{c:Cartan}
The pair $(\pi_0(\fA), \pi_0(\fM))$ is a Cartan pair.
\end{corollary}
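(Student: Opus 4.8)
The plan is simply to assemble the four defining conditions of a Cartan pair (Definition~\ref{d:Cartan}) for the pair $(\pi_0(\fA), \pi_0(\fM))$, since each has essentially been secured by the preceding results. Before checking them, I would record the two routine structural facts: $\pi_0(\fA)$ is separable, being a $\star$homomorphic image of the separable C$^*$-algebra $\fA$, and $\pi_0(\fM)$ is an abelian subalgebra, being the image of the abelian $\fM$ under the representation $\pi_0$.

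For condition (1), I would observe that $\fA$ is unital and $\pi_0$ is the (unital) GNS representation of the state $\rho_0$, so $\pi_0(1)$ is the identity of $\pi_0(\fA)$ and $\pi_0(1) = \pi_0(1_{\fM}) \in \pi_0(\fM)$. Hence $\pi_0(\fM)$ contains the unit, and as noted immediately after Definition~\ref{d:Cartan} the approximate-unit requirement is vacuous in the unital setting. Condition (2), that $\pi_0(\fM)$ is a maximal abelian C$^*$-subalgebra of $\pi_0(\fA)$, is exactly Proposition~\ref{p:pi0M is a masa}. Condition (3), regularity of $\pi_0(\fM)$, is exactly Lemma~\ref{l:pi0M is regular}.

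For condition (4), I would point to the map $E_0 : \pi_0(\fA) \to \pi_0(\fM)$, $E_0(\pi_0(a)) = \pi_0(E(a))$, which was shown to be well-defined and to satisfy the module identities $E_0(ca) = cE_0(a)$ and $E_0(ac) = E_0(a)c$ for $c \in \pi_0(\fM)$ in the paragraph preceding Lemma~\ref{l:E0 faithful}, and whose faithfulness is precisely the content of that lemma. With all four conditions in hand, the pair is Cartan.

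Because every substantive claim has already been proved, there is no real obstacle here; the corollary is a bookkeeping step. The only points demanding a line of justification, rather than a direct citation, are the separability and abelianness of the image and the unitality of $\pi_0$ guaranteeing $\pi_0(1) \in \pi_0(\fM)$, and I would dispatch each in a single sentence as above.
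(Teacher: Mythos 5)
Your proof is correct and takes essentially the same approach as the paper: the corollary is stated there with the words ``We have shown,'' relying precisely on Proposition~\ref{p:pi0M is a masa} for maximal abelianness, Lemma~\ref{l:pi0M is regular} for regularity, the preceding construction of $E_0$ together with Lemma~\ref{l:E0 faithful} for the faithful conditional expectation, and unitality for condition (1), exactly as you assemble them. Your one-line justifications of separability, abelianness of the image, and $\pi_0(1)\in\pi_0(\fM)$ are routine points the paper leaves implicit.
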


\begin{remark} \label{r:Cartan}
We have shown that the property of being a Cartan pair is preserved under certain irreducible representations. However, we have no reason to believe
it is preserved under arbitrary irreducible representations. Indeed, if $x_0$ has non-trivial isotropy, then the argument of Proposition~\ref{p:pi0M is a masa} fails, and it is not clear
that $\pi_0(\fM)$ is a masa in $\pi_0(\fA).$
\end{remark}


\section{MASA's in $\sB(H)$ and representations of Dirichlet algebras} \label{s:MASAS and reps}

We now assume that $x_0$ has trivial isotropy, so that \mbox{$\rho_0(a) := E(a)(x_0)$} \ \ $(a \in \fA)$ is a pure state and hence that the corresponding GNS
representation $\pi_0$ is irreducible. Let $\sN(x_0) := \{ n \in \sN: x_0 \in \text{dom}(\al_n) \} = \{ n \in \sN: n^*n(x_0) > 0 \}.$

 We want to obtain a model for the Hilbert space $H_0$ on which $\pi_0$ acts. To this end,
let $n \in \sN(x_0)$ and let $[n]$ be the image of $n$ in $H_0,$ under the canonical map $n \mapsto [n] = n + \fN, $ where $\fN = \{a \in \fA: \rho_0(a^*a) = 0\}.$
We denote the inner product (respectively, the norm) in $H_0$ by $<\cdot, \cdot>$ (respectively, $||\cdot||_2).$

\begin{lemma} \label{l:H0}
\begin{enumerate}
\item If $n \in \sN(x_0),$ then $||n||_2 = \sqrt{n^*n(x_0)}.$
\item If $n \in \sN(x_0)$ with $n^*n(x_0) = 1,$ then $n$ is a  unit vector in $H_0.$ 
\item If $m, \ n \in \sN(x_0)$ is such that $E(m^*n)(x_0) = 0,$ then $[n] \perp [m].$
\item If $n \in \sN(x_0)$ and $f \in \fM = C(X),$ then $[fn] = f\circ \al_n(x_0)[n]$ and $[nf] = f(x_0)[n].$
\end{enumerate}
\end{lemma}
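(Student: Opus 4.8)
The plan is to compute everything directly from the GNS inner product on $H_0$, which is determined by $\langle [a], [b] \rangle = \rho_0(b^*a) = E(b^*a)(x_0)$, combined with the normalizer algebra established earlier (Lemmas~\ref{l:E(n^*an)}, \ref{l:hat n} and Corollary~\ref{c:fn}).

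For part (1), I would write $\|n\|_2^2 = \langle [n], [n] \rangle = \rho_0(n^*n) = E(n^*n)(x_0) = n^*n(x_0)$, the last equality holding because $n^*n \in \fM$ so $E$ fixes it; taking square roots gives the claim. Part (2) is then immediate: if $n^*n(x_0) = 1$ then $\|n\|_2 = 1$. For part (3), I would compute $\langle [n], [m] \rangle = \rho_0(m^*n) = E(m^*n)(x_0)$, which is zero by hypothesis, so $[n] \perp [m]$. These three are essentially one-line unwindings of the definition of $\rho_0$ together with the fact that $n^*n, m^*n$ and their expectations live in $C(X)$ where $\rho_0$ is evaluation at $x_0$.

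The substantive part is (4), where the module action of $\fM$ on $H_0$ must be rewritten as scalar multiplication. For $[fn]$, I would invoke Corollary~\ref{c:fn}, which gives $fn = n\, f\circ\al_n$; hence $[fn] = [n\,(f\circ\al_n)]$. The factor $f\circ\al_n$ lies in $\fM = C(X)$, so it acts on the right of $n$. To pull it out as a scalar I would show that right multiplication by a function $g \in C(X)$ sends $[n]$ to $g(x_0)[n]$: indeed $[ng] - g(x_0)[n] = [n(g - g(x_0)1)]$, and its squared norm is $\rho_0\big((g-g(x_0))^* n^* n (g - g(x_0))\big) = |g - g(x_0)|^2(x_0)\, n^*n(x_0) = 0$ since $g - g(x_0)$ vanishes at $x_0$ and all factors are in $\fM$. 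Applying this with $g = f\circ\al_n$ yields $[fn] = (f\circ\al_n)(x_0)[n] = f\circ\al_n(x_0)[n]$. The second identity $[nf] = f(x_0)[n]$ is the same right-multiplication computation taken directly with $g = f$, using $[nf] - f(x_0)[n] = [n(f - f(x_0)1)]$ and the identical norm calculation.

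I expect the only genuine subtlety to be the bookkeeping in part (4): keeping track of which side $\fM$ acts on and correctly applying Corollary~\ref{c:fn} to convert the left action $fn$ into a right action $n\,(f\circ\al_n)$ before extracting the scalar. Everything else reduces to the observation that $\rho_0$ restricted to $\fM$ is point evaluation at $x_0$ and that products of normalizers with functions remain controlled by $E$. No completeness or density argument is needed, since all assertions concern individual vectors $[n]$ with $n \in \sN(x_0)$, so the proof should be short once the right-multiplication lemma is isolated.
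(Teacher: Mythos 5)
Your proof is correct, and parts (1)--(3) coincide with the paper's (both are forced one-line unwindings of $\langle [a],[b]\rangle = E(b^*a)(x_0)$). For part (4), however, you take a genuinely different route. The paper computes $\|[fn]\|_2 = |f\circ\al_n(x_0)|\,\|[n]\|_2$ and $\langle [fn],[n]\rangle = E(n^*fn)(x_0) = f\circ\al_n(x_0)\,n^*n(x_0)$, observes that this forces equality in the Cauchy--Schwarz inequality, and invokes the converse direction of Cauchy--Schwarz to conclude that $[fn]$ is a scalar multiple of $[n]$ with the stated scalar. You instead prove proportionality directly: using Corollary~\ref{c:fn} to convert $fn$ into the right action $n\,(f\circ\al_n)$, you show that $[ng]-g(x_0)[n]=[n(g-g(x_0)1)]$ has zero GNS norm, since $\rho_0\bigl((g-g(x_0))^*\,n^*n\,(g-g(x_0))\bigr) = |g(x_0)-g(x_0)|^2\,n^*n(x_0)=0$, everything in sight lying in $\fM$ where $\rho_0$ is evaluation at $x_0$. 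Both arguments ultimately rest on the same identity $n^*fn = (f\circ\al_n)\,n^*n$ (which the paper uses directly and you access through Corollary~\ref{c:fn}), but your version avoids the equality case of Cauchy--Schwarz entirely, treats the two identities $[fn]=f\circ\al_n(x_0)[n]$ and $[nf]=f(x_0)[n]$ by one uniform computation (the paper dismisses the second as ``similar''), and isolates a reusable fact --- right multiplication by $g\in C(X)$ acts on $[n]$ as the scalar $g(x_0)$ --- which is arguably cleaner and slightly more informative than the proportionality-by-extremality argument in the paper. One small bookkeeping point, which the paper glosses as well: $f\circ\al_n$ is a priori defined only on $\mathrm{dom}(\al_n)$, so expressions like $g-g(x_0)1$ should be read through the products $ng-g(x_0)n \in \fA$ and $(f\circ\al_n)\,n^*n$, which extend continuously by zero since $n^*n$ vanishes off $\mathrm{dom}(\al_n)$; your computation survives this reading unchanged.
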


\begin{proof}
For [1], note that $<[n], [n]> = E(n^*n)(x_0) = n^*n(x_0),$ and [2] follows immediately.  \\
For [3], $<[n], [m]> = E(m^*n)(x_0) = 0,$ \\
and for [4], a calculation shows that $||[fn]||_2 = |f\circ \al_n(x_0)| \ ||[n]||_2.$ Now
\begin{align*}
<[fn], [n]> &= E(n^*fn)(x_0)  \\
	&= f\circ \al_n(x_0) n^*n(x_0) \\
	&= <(f\circ \al_n(x_0) )n, n> 
\end{align*}
and so $|<[fn], [n]>| = ||[fn]||_2\, |||n||_2.$ It follows from the converse direction of Cauchy-Schwarz that $[fn]$ is a scalar multiple of $[n],$ and by the above that
the scalar is $f\circ \al_n(x_0).$  The calculation that $[nf] = f(x_0) [n]$ is similar.
\end{proof}

Define an equivalence relation on $\sN(x_0) $ by: $n_1 \sim n_2$ if and only if there are functions $h_1, h_2 \in C(X)$ with $h_1(x_0),\  h_2(x_0) \neq 0,$ such that
$n_1 h_1 = n_2 h_2.$ 

\begin{lemma} \label{l:equivalent normalizers}
Let $n_1, n_2 \in \sN(x_0),$ and suppose that 
$[n_1]$ is a nonzero vector. Then $n_1 \sim n_2$ if and only if there is a constant $c \in \bbC, c \neq 0,$ such that  $[n_1] = c [n_2].$
\end{lemma}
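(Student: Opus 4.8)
The plan is to prove both implications, with the reverse one carrying the bulk of the work. The forward direction is a short computation: if $n_1 \sim n_2$, choose $h_1, h_2 \in C(X)$ with $h_1(x_0), h_2(x_0) \neq 0$ and $n_1 h_1 = n_2 h_2$. Applying part (4) of Lemma~\ref{l:H0} to each side gives $h_1(x_0)[n_1] = [n_1 h_1] = [n_2 h_2] = h_2(x_0)[n_2]$, so $[n_1] = c[n_2]$ with $c = h_2(x_0)/h_1(x_0) \neq 0$.

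For the converse, suppose $[n_1] = c[n_2]$ with $c \neq 0$ and $[n_1] \neq 0$. First I would extract germ information from this vector identity. Pairing both sides with $[n_1]$ and using $\langle [n_2], [n_1]\rangle = E(n_1^* n_2)(x_0)$ together with $\langle [n_1],[n_1]\rangle = n_1^* n_1(x_0) > 0$ (from parts (3) and (1) of Lemma~\ref{l:H0}), I obtain $n_1^* n_1(x_0) = c\, E(n_1^* n_2)(x_0)$, whence $E(n_1^* n_2)(x_0) \neq 0$. By Lemma~\ref{l:hat n}, this forces $\al_{n_1} = \al_{n_2}$ on some open neighborhood $U$ of $x_0$.

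The remaining task is to upgrade this local agreement of partial homeomorphisms into an honest algebraic identity $n_1 h_1 = n_2 h_2$ with $h_1, h_2 \in C(X)$ nonvanishing at $x_0$; this is the main obstacle, since the relation $\sim$ demands multipliers in $C(X)$ rather than arbitrary normalizers. My approach is to work with $p := n_2^* n_1 \in \sN$, which satisfies $\al_p = \al_{n_2}^{-1}\circ\al_{n_1} = \mathrm{id}$ on $U$ and $E(p)(x_0) = \overline{E(n_1^* n_2)(x_0)} \neq 0$. Because the open support of a normalizer is a bisection and $\al_p$ is the identity near $x_0$, the section $\hat p$ should coincide with $\wh{E(p)}$ on a neighborhood of the unit $[x_0,1,x_0]$; consequently, choosing a cutoff $f \in C(X)$ with $f(x_0) = 1$ supported in a small enough neighborhood, one obtains $pf = E(p)\,f =: h_2 \in C(X)$. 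I would then exploit $n_2 p = (n_2 n_2^*)\, n_1 = e\,n_1$, where $e := n_2 n_2^* \in C(X)$ by the normalizer property, and rewrite $e\,n_1 = n_1\,(e\circ\al_{n_1})$ via Corollary~\ref{c:fn}. Multiplying $n_2 p = e\, n_1$ on the right by $f$ then yields $n_2 h_2 = n_1 h_1$ with $h_1 := (e\circ\al_{n_1})f \in C(X)$. Finally I would verify nonvanishing at $x_0$: $h_2(x_0) = E(p)(x_0) \neq 0$, while $h_1(x_0) = e(\al_{n_1}(x_0)) = n_2 n_2^*(\al_{n_2}(x_0)) = n_2^* n_2(x_0) > 0$ by Lemma~\ref{l:n*n and nn*}, giving $n_1 \sim n_2$ and completing the proof.
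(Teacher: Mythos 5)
Your proof is correct, and your converse takes a genuinely different route from the paper's. The paper pairs the identity $[n_1]=c[n_2]$ against \emph{all} of $\fA$, concluding $\hat{a}(\al_{n_1}(x_0),n_1,x_0)=\hat{a}(\al_{n_2}(x_0),cn_2,x_0)$ for every $a\in\fA$, hence that $[y,n_1,x_0]=[y,cn_2,x_0]$ as elements of the twist $\Si$; it then invokes the definition of the equivalence relation defining $\Si$ (\cite{JRen08}, Section 4) to produce $b_1,b_2\in C(X)$ with $b_1(x_0),b_2(x_0)>0$ and $n_1b_1=cn_2b_2$. You instead extract from the hypothesis only the single scalar consequence $E(n_1^*n_2)(x_0)\neq 0$, convert it into germ equality $\al_{n_1}=\al_{n_2}$ near $x_0$ via Lemma~\ref{l:hat n}, and then build the multipliers explicitly from $p=n_2^*n_1$ and a cutoff, so you never need to unwind the construction of the twist's equivalence classes; your argument stays entirely within the paper's own lemmas. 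A pleasant byproduct is that you actually prove something stronger: non-orthogonality $\langle [n_1],[n_2]\rangle\neq 0$ alone already implies $n_1\sim n_2$, which together with the easy direction yields Corollary~\ref{c:not sim} immediately. Two spots should be tightened. First, the assertion $pf=E(p)f$ is stated with a ``should coincide''; the honest argument is that for $x$ in the neighborhood $U$ on which $\al_p=\mathrm{id}$, Lemma~\ref{l:hat n} forces $\hat{p}(z,m,x)=0$ whenever the germ of $\al_m$ at $x$ is not the identity germ, while at identity germs $\hat{p}$ and $\wh{E(p)}$ both reduce to evaluations of $E(p)$ (using $E(m^*g)=E(m^*)g$ for $g\in\fM$), so $\wh{pf}=\wh{E(p)f}$ pointwise once $\supp f\subset U$. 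Second, since $e\circ\al_{n_1}$ is defined only on $\mathrm{dom}(\al_{n_1})$, you should choose $f$ with $\supp f\subset U\subseteq\mathrm{dom}(\al_{n_1})\cap\mathrm{dom}(\al_{n_2})$ (shrinking $U$ if needed, which also guarantees $p^*p=(n_2^*n_2\circ\al_{n_1})\,n_1^*n_1>0$ on $U$, so $U\subseteq\mathrm{dom}(\al_p)$) so that $h_1=(e\circ\al_{n_1})f$ extends by zero to a genuine element of $C(X)$; with these details supplied the proof is complete.
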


\begin{proof}
Suppose $n_1 \sim n_2$ and let $h_1, h_2$ be as in the definition. Then for any vector $v \in H_0,$ 
\begin{align*}
< [n_1], v> = \frac{1}{h_1(x_0)}<[n_1 h_1], v> \\
	&= \frac{1}{h_1(x_0)}<[n_2 h_2], v> \\
	&= \frac{h_2(x_0)}{h_1(x_0)}<[n_2], v>
\end{align*}
Since $v \in H_0$ was arbitrary, it follows that $ [n_1] = c[n_2]$ with $c =  \frac{h_2(x_0)}{h_1(x_0)} \neq 0.$

For the converse, suppose there is $c \in \bbC, c \neq 0,$ such that $[n_1] = c[n_2].$ In particular, for all $m \in \sN,$
\[ <[n_1], [m]> = <c[n_2], [m]> \text{ or } E(m^*n_1)(x_0) = E(m^*cn_2)(x_0) .\]
Taking $m = n_1, cn_2,$ yields that $n_1^*n_1(x_0) = |c|^2 n_2^*n_2(x_0) $ and since the vector $[n_1] \neq 0, \ n_1^*n_1(x_0) > 0.$

Also $<[a], [n_1]> = <[a], [cn_2]>$ for any $a \in \fA,$ so that $E(n_1^*a)(x_0) = E((cn_2)^*a)(x_0),$ and utilizing the result above,
\[ \hat{a}(\al_{n_1}(x_0), n_1, x_0) = \hat{a}(\al_{n_2}(x_0), cn_2, x_0), \ a \in \fA \]
so that, in particular $\al_{n_1}(x_0) = \al_{n_2}(x_0),$ and denoting this point by $y,$
\[ [y, n_1, x_0] = [y, cn_2, x_0]\]

From \cite{JRen08}, Section 4, there are functions $b_1, b_2 \in C(X)$ with \mbox{$b_1(x_0), b_2(x_0) > 0$} such that $n_1 b_1 = c n_2 b_2.$
Thus, taking $h_1 = b_1,  \ h_2 = cb_2$ we have that $h_1(x_0), h_2(x_0) \neq 0$ and $ n_1 h_1 = n_2 h_2,$ so that $n_1 \sim n_2.$

\end{proof}

\begin{corollary} \label{c:not sim}
For nonzero $n_1, n_2 \in \sN(x_0),$ either $n_1 \sim n_2$ or $[n_1] \perp [n_2].$
\end{corollary}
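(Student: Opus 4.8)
The plan is to decide the dichotomy by inspecting the single number $<[n_1], [n_2]> = E(n_2^* n_1)(x_0)$, the inner-product formula that underlies Lemma~\ref{l:H0}[3]. Everything turns on whether this number vanishes, so I would split into two cases according to the vanishing of $E(n_2^* n_1)(x_0)$. If $E(n_2^* n_1)(x_0) = 0$ there is nothing to prove: Lemma~\ref{l:H0}[3] gives $[n_1] \perp [n_2]$ at once, which is the second alternative.

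The substance lies in the case $E(n_2^* n_1)(x_0) \neq 0$, where I must produce the equivalence $n_1 \sim n_2$. The observation I would exploit is that each vector $[n]$ is concentrated at a single germ over $x_0$. Realizing $H_0$ concretely, $[n]$ is the function $[z, m, x_0] \mapsto \hat{n}(z, m, x_0)$ on $s^{-1}(x_0)$, and by Lemma~\ref{l:hat n} this is nonzero only when $\al_m = \al_n$ on a neighborhood of $x_0$, i.e. only at the one germ determined by $n$; in particular $||[n]||_2^2 = n^* n(x_0) = |\hat{n}(\al_n(x_0), n, x_0)|^2$ is carried by that single germ. Now $E(n_2^* n_1)(x_0) \neq 0$ forces, again by Lemma~\ref{l:hat n}, that $\al_{n_1} = \al_{n_2}$ near $x_0$, so $[n_1]$ and $[n_2]$ are supported at the \emph{same} germ, with common range point $y = \al_{n_1}(x_0) = \al_{n_2}(x_0)$. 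Evaluating $<[n_1], [n_2]> = \ol{\hat{n_2}(y, n_1, x_0)}\,\hat{n_1}(y, n_1, x_0)$ at that common germ then yields $|<[n_1], [n_2]>|^2 = ||[n_1]||_2^2\,||[n_2]||_2^2$; hence Cauchy--Schwarz is saturated and $[n_1] = c\,[n_2]$ for some $c \neq 0$. Since $[n_1]$ is a nonzero vector, Lemma~\ref{l:equivalent normalizers} upgrades this proportionality to $n_1 \sim n_2$, closing the dichotomy.

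I expect the main obstacle to be the middle step, namely verifying that $[n]$ lives on a single germ of $s^{-1}(x_0)$ and that the inner product collapses to a one-term sum. This is precisely where Lemma~\ref{l:hat n} does the work, translating nonvanishing of $E(m^* n)$ into germ equality of $\al_m$ and $\al_n$; once that is in hand, the orthogonality/proportionality split and the saturation of Cauchy--Schwarz are routine. The one point requiring care is confirming that the sum defining $<[n_1], [n_2]>$ is genuinely indexed by germs in $G$ (so that "same germ'' means "same index'', with $|\hat n(z,m,x_0)|$ independent of the chosen representative $m$), which is the same bookkeeping already carried out for $\wh{a^*a}(y, 1, y)$ in Lemma~\ref{l:ker pi0}.
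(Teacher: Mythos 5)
Your proof is correct, but it runs in the opposite direction from the paper's and uses different machinery in the middle. The paper proves the contrapositive $n_1 \nsim n_2 \Rightarrow [n_1] \perp [n_2]$ head-on: if $n_1 \nsim n_2$, then a fortiori there are no $b_1, b_2 \in C(X)$ with $b_1(x_0), b_2(x_0) > 0$ and no $z \in \bbT$ with $n_1 b_1 = z n_2 b_2$, so $[\al_{n_1}(x_0), n_1, x_0]$ and $[\al_{n_2}(x_0), z n_2, x_0]$ are distinct in the twist for every $z \in \bbT$; passing to $G = \Si/\bbT$, the germs of $\al_{n_1}$ and $\al_{n_2}$ at $x_0$ differ, and Lemma~\ref{l:hat n} then gives $E(n_1^* n_2)(x_0) = 0$ at once --- no Cauchy--Schwarz, no model of $H_0$. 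You instead take the non-orthogonal case forward: Lemma~\ref{l:hat n} gives germ equality, your single-germ support computation gives saturation of Cauchy--Schwarz, hence $[n_1] = c\,[n_2]$ with $c \neq 0$ (both vectors are nonzero since $n_i \in \sN(x_0)$ forces $n_i^* n_i(x_0) > 0$), and the converse half of Lemma~\ref{l:equivalent normalizers} yields $n_1 \sim n_2$. The steps all check out: the one-term collapse of the inner product, its independence of the chosen representative at the common germ, and the norm identities $\lvert\hat{n_i}(y, n_1, x_0)\rvert = \lVert [n_i] \rVert_2$ are exactly the bookkeeping of Lemma~\ref{l:ker pi0}, as you anticipated. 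The two arguments are contrapositives of one another and both pivot on the same direction of Lemma~\ref{l:hat n}; the real difference is the middle. Your route is somewhat circuitous: once the germs agree at $x_0$, the quotient $G = \Si/\bbT$ already produces $b_1, b_2$ and $z \in \bbT$ with $n_1 b_1 = z n_2 b_2$, hence $n_1 \sim n_2$ directly --- indeed the converse direction of Lemma~\ref{l:equivalent normalizers}, which you invoke, internally re-derives germ equality before appealing to that same Section~4 fact of \cite{JRen08}, so your Cauchy--Schwarz detour is sound but strictly unnecessary. What your version buys is that it stays entirely inside the concrete $\ell^2$-model of $H_0$ and reuses already-proved lemmas rather than touching the twist afresh; the paper's version is shorter and keeps the twisted-groupoid mechanism explicit.
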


\begin{proof}
If $n_1 \nsim n_2,$ then by Lemma~\ref{l:equivalent normalizers} there do not exist functions $h_1, h_2 \in C(X), \ h_1(x_0), h_2(x_0) \neq 0,$ such that
$n_1h_1 = n_2 h_2.$ A fortiori there do not exist functions $b_1, b_2 \in C(X), $ and $z \in \bbT,$ with $b_1(x_0), b_2(x_0) > 0$ and
$n_1 b_1 = z n_2 b_2.$ Thus, $[\al_{n_1}(x_0), n_1, x_0]$ and $[\al_{n_2}(x_0), zn_2, x_0]$ are distinct points of the twised groupoid $(G, \Si),$
for all $z \in \bbT.$ As $\Si/\bbT= G,$ it follows that $[\al_{n_1}(x_0), \al_{n_1}, x_0],$ and $ [\al_{n_2}(x_0), \al_{n_2}, x_0]$ are distinct elements of the groupoid $G,$
so that the germs of $\al_{n_2}, \al_{n_2}$ are distinct at $x_0.$ From Lemma~\ref{l:hat n}, $E(n_1^*n_2)(x_0) = 0.$ In other words,
$<[n_1], [n_2]> = 0.$
\end{proof}

In each equivalence class of $\sN(x_0),$ choose a representative which is a unit vector. Since the set of equivalence classes is countable, we may enumerate the collection
$\{n_0 = E(n_0), n_1, n_2, \dots \}.$

\begin{lemma} \label{l:on basis}
The set $\{[n_i]: i = 0, 1, 2 \dots\}$ is an orthonormal basis for $H_0.$
\end{lemma}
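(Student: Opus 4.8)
The plan is to verify separately that $\{[n_i]\}$ is orthonormal and that it is total in $H_0$; together these give that it is an orthonormal basis. Orthonormality is immediate from the construction together with Corollary~\ref{c:not sim}: each representative $n_i$ was chosen so that $[n_i]$ is a unit vector, so $\|[n_i]\|_2 = 1$, and for $i \neq j$ the representatives lie in distinct equivalence classes, hence $n_i \nsim n_j$, whereupon Corollary~\ref{c:not sim} forces $[n_i] \perp [n_j]$.

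The substance of the argument is totality, that is, $\overline{\operatorname{span}}\{[n_i]\} = H_0$. First I would note that the canonical map $a \mapsto [a]$ is norm-contractive, since $\|[a]\|_2^2 = \rho_0(a^*a) \leq \|a\|^2$, and that its image is dense in $H_0$ by the GNS construction. Because $\fA_c$, the linear span of the compactly supported normalizers $\sN_c$, is dense in $\fA$, it follows that $\operatorname{span}\{[n] : n \in \sN_c\}$ is dense in $H_0$. Thus it suffices to show that every $[n]$ with $n \in \sN$ lies in $\overline{\operatorname{span}}\{[n_i]\}$.

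To do this I would split according to whether $n \in \sN(x_0)$. If $n^*n(x_0) = 0$, then $\|[n]\|_2^2 = E(n^*n)(x_0) = n^*n(x_0) = 0$, so $[n] = 0$ and there is nothing to prove. If instead $n \in \sN(x_0)$, then $n^*n(x_0) > 0$, so $[n] \neq 0$, and $n$ belongs to exactly one equivalence class, say $n \sim n_i$; Lemma~\ref{l:equivalent normalizers} then produces a nonzero scalar $c$ with $[n] = c[n_i]$. Hence every such $[n]$ lies in the closed span of $\{[n_i]\}$, and combining this with the density statement of the previous paragraph gives $\overline{\operatorname{span}}\{[n_i]\} = H_0$.

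No step here presents a genuine obstacle, since the preceding lemmas carry the real weight; the point deserving a remark is the reduction of an arbitrary $n \in \sN(x_0)$ to a chosen representative, which is exactly the content of Lemma~\ref{l:equivalent normalizers}, and the prior disposal of normalizers outside $\sN(x_0)$ via Lemma~\ref{l:H0}. One should also confirm that the set of equivalence classes, hence the index set for the $n_i$, is countable; this holds because $\fA$ is separable, so $H_0$ is separable and can support only countably many mutually orthogonal unit vectors, which is what justifies the enumeration used to define the $n_i$.
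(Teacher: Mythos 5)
Your proof is correct and follows essentially the same route as the paper's: orthonormality via Corollary~\ref{c:not sim} and the choice of unit-vector representatives, then density by reducing through $\fA_c$ so that each normalizer class $[m]$ is either zero or, by Lemma~\ref{l:equivalent normalizers}, a scalar multiple of some $[n_i]$. Your two added remarks --- contractivity of the canonical map $a \mapsto [a]$ and the separability argument for countability of the equivalence classes --- are points the paper leaves implicit, and they are correctly supplied.
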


\begin{proof} By assumption the $\{[n_i]\}$ are unit vectors, and since $n_i, n_j$ belong to different equivalence classes for $i \neq j,$ it follows from
Corollary~\ref{c:not sim} that  they are orthonormal.  Now it is enough to show that the linear span is dense.  Since $\fA_c$ is dense in $\fA,$
it is enough to show that for $a \in \fA_c,$ that $[a] \in \text{span}\{[n_i]\}.$

Let $a \in \fA_c.$ Hence
$a$ has the form $a = \sum_{k=0}^N m_k,\ m_k \in \sN.$ Now either $[m_k] = 0,$ which is the case if $m_k^*m_k(x_0) = 0,$ or else $m_k \in \sN(x_0),$ in which case
 $[m_k] = \be_k [n_{i_k}]$ for some $\be_k \in \bbC, \ i_k \in \bbN.$ Thus, $[a] \in \text{span}\{[n_i]\}.$
\end{proof}

Next we want to show that $\pi_0(\fM)$ is weakly dense in the (discrete) masa $\ell^{\infty}(\{n_i\}).$ To this end, let $c \in \ell^{\infty}(\{n_i\}),\ c = (c_i)$ and 
$F \subset \{0, 1, 2, \dots\}$ a finite subset. We claim there is an $f \in C(X) $ with $\pi_0(f)[n_i] = c_i [n_i], \ i \in F$ and $||f|| \leq ||c||_{\infty}.$

Now $\pi_0(f)[n_i] = [fn_i] = f\circ \al_{n_i}(x_0) [n_i]. $ Thus we need to obtain $f \in C(X)$ with $f\circ \al_{n_i}(x_0) = c_i, \ i \in F,$ and $||f|| \leq ||c||_{\infty}.$
But  $\{ \al_{n_i}(x_0): i \in F\}$ is a finite set of points of $X$ which are distinct because $x_0$ has trivial isotropy.
Thus the claim follows from the Tietze Extension Theorem. Denote the function $f = f_F.$

Now the net $\{f_F\}_F,$ where the finite subsets $F$ are partially ordered by inclusion, converges in the weak operator topology to $c.$
This proves
\begin{corollary} \label{c:fM is a masa}
The weak operator closure of $\pi_0(\fM)$ is a discrete masa in $B(H_0).$
\end{corollary}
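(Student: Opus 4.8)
The plan is to identify the weak operator closure $\ol{\pi_0(\fM)}^{\text{wk}}$ explicitly with the algebra $\sD := \ell^{\infty}(\{n_i\})$ of all bounded operators that are diagonal with respect to the orthonormal basis $\{[n_i]\}$ of $H_0$ furnished by Lemma~\ref{l:on basis}, and then to invoke the standard fact that a diagonal algebra relative to an orthonormal basis is a masa. So the corollary reduces to the two inclusions $\ol{\pi_0(\fM)}^{\text{wk}} \subseteq \sD$ and $\sD \subseteq \ol{\pi_0(\fM)}^{\text{wk}}$.

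First I would record that $\pi_0(\fM)$ already consists of diagonal operators. By Lemma~\ref{l:H0}(4), for each $f \in \fM = C(X)$ we have $\pi_0(f)[n_i] = [fn_i] = f\circ\al_{n_i}(x_0)\,[n_i]$, so $\pi_0(f)$ acts on each basis vector $[n_i]$ as multiplication by the scalar $f\circ\al_{n_i}(x_0)$. Hence $\pi_0(\fM) \subseteq \sD$. Since $\sD$ coincides with the commutant $\{P_i\}'$ of the family of rank-one projections $P_i$ onto the lines $\bbC[n_i]$, and commutants are always weak operator closed, it follows that $\ol{\pi_0(\fM)}^{\text{wk}} \subseteq \sD$.

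The reverse inclusion is exactly the content of the paragraph preceding the corollary. For an arbitrary $c = (c_i) \in \sD$ and each finite $F \subset \{0,1,2,\dots\}$, the Tietze Extension Theorem produced $f_F \in C(X)$ with $\pi_0(f_F)[n_i] = c_i\,[n_i]$ for $i \in F$ and $\|f_F\| \le \|c\|_{\infty}$. The uniform bound gives $\|\pi_0(f_F)\| \le \|c\|_{\infty}$, and since the $f_F$ agree with $c$ on the growing finite sets $F$, the net $\{\pi_0(f_F)\}_F$ converges to $c$ in the weak operator topology. Thus $c \in \ol{\pi_0(\fM)}^{\text{wk}}$, so $\sD \subseteq \ol{\pi_0(\fM)}^{\text{wk}}$, and combining the two inclusions yields $\ol{\pi_0(\fM)}^{\text{wk}} = \sD$.

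Finally I would observe that $\sD = \ell^{\infty}(\{n_i\})$ is a discrete masa in $\sB(H_0)$: the operators diagonal with respect to an orthonormal basis form a maximal abelian self-adjoint subalgebra (one checks $\sD' = \sD$), and it is atomic, hence discrete, because its minimal projections are the rank-one $P_i$. The only step requiring care is the inclusion $\ol{\pi_0(\fM)}^{\text{wk}} \subseteq \sD$: it depends essentially on $\{[n_i]\}$ being an orthonormal \emph{basis} for all of $H_0$ rather than merely an orthonormal set, so that the weak-operator closure cannot introduce off-diagonal behaviour. This is precisely where the completeness established in Lemma~\ref{l:on basis} is indispensable.
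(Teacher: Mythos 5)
Your proof is correct and follows essentially the same route as the paper: the diagonal action of $\pi_0(\fM)$ via Lemma~\ref{l:H0}(4), the Tietze-extension net $\{f_F\}$ with $\|f_F\| \leq \|c\|_{\infty}$ converging in the weak operator topology to an arbitrary element of $\ell^{\infty}(\{n_i\})$, and the standard fact that the diagonal algebra relative to the orthonormal basis of Lemma~\ref{l:on basis} is a discrete masa. The only difference is cosmetic: you spell out the inclusion $\ol{\pi_0(\fM)}^{\text{wk}} \subseteq \ell^{\infty}(\{n_i\})$ as a commutant argument with the rank-one projections, a point the paper leaves implicit.
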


\begin{remark} \label{r:masa preserving}
We note that it is in general not the case that if $\fA$ is a unital C$^*$-algebra with distinguished masa $\fM,$ and $\pi: \fA \to \sB(H)$
an irreducible representation, that $\pi(\fM)$ is weakly dense in a masa of $\sB(H).$  An example of this phenomenon is given in \cite{OrrPet95}, section IV.3.
\end{remark}

Now combining the results of Corollary~\ref{c:fM is a masa} together with Proposition~\ref{p:dirichlet} we obtain:

\begin{theorem} \label{t:Dirichlet}
Let $\sA$ be a unital operator algebra and $\fA$ a C$^*$-cover such that $\sA$ is Dirichlet in $\fA.$
  Suppose that $\sA$ contains an abelian $*$-subalgebra $\fM,$ which, under the natural
embedding of $\sA \mapsto \fA,$ is a Cartan  subalgebra of $\fA.$ Let $\pi_0$ be the irreducible representation of $\fA$ from Corollary~\ref{c:fM is a masa}.
Then the restriction of $\pi_0$ to $\sA,\  \pi_0: \sA \to \sB(H_0)$ is a nest representation. 
\end{theorem}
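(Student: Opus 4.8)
The plan is to derive the theorem as an immediate consequence of the two principal results already in hand, Proposition~\ref{p:dirichlet} and Corollary~\ref{c:fM is a masa}: the substantive analysis has been carried out in constructing $\pi_0$ and in identifying the weak closure of $\pi_0(\fM)$, so what remains is to check that the hypotheses of Proposition~\ref{p:dirichlet} are satisfied and to read off the conclusion.

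First I would confirm that $\pi_0$ is an irreducible representation of the C$^*$-cover $\fA$. This is exactly Corollary~\ref{c:irreducible rep}(ii): $\pi_0$ is the GNS representation of the pure state $\rho_0(a) = E(a)(x_0)$ attached to the point $x_0$ of trivial isotropy, and a GNS representation of a pure state is irreducible. Next, since by hypothesis $\fM \subset \sA$, we have $\pi_0(\fM) \subset \pi_0(\sA)$, and therefore, by monotonicity of weak-operator closures under inclusion, $\overline{\pi_0(\fM)}^{\mathrm{wk}} \subseteq \overline{\pi_0(\sA)}^{\mathrm{wk}}$. By Corollary~\ref{c:fM is a masa} the set $\overline{\pi_0(\fM)}^{\mathrm{wk}}$ is a masa in $\sB(H_0)$; hence $\overline{\pi_0(\sA)}^{\mathrm{wk}}$ contains a masa, which is precisely the remaining hypothesis of Proposition~\ref{p:dirichlet}.

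With irreducibility and the masa condition verified, Proposition~\ref{p:dirichlet} applies and gives that $\Lat(\pi_0(\sA))$ is a nest, that is, the lattice of invariant subspaces of $\pi_0(\sA)$ is totally ordered. Finally, because $\pi_0$ is the restriction to $\sA$ of a $*$-representation of the C$^*$-cover $\fA$, it is completely contractive, so it qualifies as a representation in the sense of Notation~\ref{n:representation}; a representation whose invariant subspace lattice is a nest is by definition a nest representation. This establishes that the restriction $\pi_0|_{\sA}$ is a nest representation.

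I anticipate no real obstacle here, since the difficulty has deliberately been absorbed into the earlier results. The single point meriting care is the verification that $\fM \subset \sA$ transfers the masa property from $\overline{\pi_0(\fM)}^{\mathrm{wk}}$ to the larger algebra $\overline{\pi_0(\sA)}^{\mathrm{wk}}$ in the form required by Proposition~\ref{p:dirichlet}; but this follows at once from the inclusion of weak closures noted above, so the proof reduces to correctly invoking the two cited results.
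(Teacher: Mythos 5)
Your proof is correct and follows the paper's own route exactly: the paper states Theorem~\ref{t:Dirichlet} as an immediate consequence of combining Corollary~\ref{c:fM is a masa} with Proposition~\ref{p:dirichlet}, and your argument simply spells out that combination, correctly supplying irreducibility via Corollary~\ref{c:irreducible rep} and the masa hypothesis via the inclusion $\overline{\pi_0(\fM)}^{\mathrm{wk}} \subseteq \overline{\pi_0(\sA)}^{\mathrm{wk}}$. The details you add (including complete contractivity of the restriction, per Notation~\ref{n:representation}) are accurate and fill in what the paper leaves implicit.
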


\begin{remark} \label{r:other representations}
Note that Proposition~\ref{p:dirichlet} only requires that the representation $\pi$ satisfy the condition that $\ol{\pi(\sA)}^{wk}$ contain a masa, whereas
the representations $\pi_0$ of Theorem~\ref{t:Dirichlet} satisfy the stronger condition that there is a masa $\fM \subset \sA$ whose image under $\pi_0$
is weakly dense in a masa in $\sB(H_{\pi_0}).$ Thus there are possibly additional irreducible representations $\pi$ of $\fA$ with $\text{Lat}(\pi(\sA))$ a nest in $\sB(H_{\pi}).$
\end{remark}

\begin{lemma} \label{l:sufficiently many}
Let $\fA$ be a unital, seperable C$^*$-algebra, and $\fM$ a masa in $\fA$ such that the pair $(\fA, \fM)$ is a Cartan pair. 
If $X = \hat{\fM},$ let $Y$ denote the subset of $X$ consisting of points with trivial isotropy. For $y \in Y,$ let $\rho_y$ denote the pure state
$\rho_y(a) = E(a)(y),$ and $\pi_y$ the corresponding irreducible representation. Then the representation
\[ \Pi(a) := \bigoplus_{y \in Y} \pi_y(a) \]
is isometric.
\end{lemma}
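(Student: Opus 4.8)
The plan is to show that $\Pi$ is faithful. Since $\Pi$ is a $*$-homomorphism of C$^*$-algebras it is automatically contractive, and an injective $*$-homomorphism between C$^*$-algebras is isometric; hence faithfulness will give $||\Pi(a)|| = ||a||$ for every $a \in \fA.$ Because $\ker \Pi = \bigcap_{y \in Y} \ker \pi_y,$ it suffices to prove that if $\pi_y(a) = 0$ for all $y \in Y,$ then $a = 0.$

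First I would reduce the vanishing of $\pi_y(a)$ to a statement about the conditional expectation $E.$ Let $\xi_y$ be the cyclic GNS vector implementing $\rho_y,$ so that $\rho_y(b) = \langle \pi_y(b)\xi_y, \xi_y \rangle.$ Then
\[ \rho_y(a^*a) = \langle \pi_y(a)\xi_y, \pi_y(a)\xi_y \rangle = \| \pi_y(a)\xi_y \|^2, \]
so $\pi_y(a) = 0$ forces $\rho_y(a^*a) = 0.$ By the definition $\rho_y(b) = E(b)(y),$ this says $E(a^*a)(y) = 0.$ Letting $y$ range over all of $Y,$ I obtain that the function $E(a^*a) \in \fM = C(X)$ vanishes at every point of $Y.$ (Equivalently one may invoke Lemma~\ref{l:ker pi0}, which identifies $\ker \pi_y$ with the vanishing of $\hat a$ on $s^{-1}(\sG(y)),$ together with $\bigcup_{y \in Y} \sG(y) = Y,$ a consequence of Lemma~\ref{l:isotropy preserved}; but only the easy implication is needed here.)

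The final step is a density argument. Since the groupoid $G$ attached to the Cartan pair $(\fA, \fM)$ is essentially principal in the sense of the Kumjian--Renault duality theory, the set $Y$ of units of trivial isotropy is dense in $X.$ The element $E(a^*a)$ is a genuine member of $C(X)$ and so is continuous on $X;$ vanishing on the dense set $Y$ forces $E(a^*a) = 0$ on all of $X,$ i.e.\ $E(a^*a) = 0$ in $\fM.$ Faithfulness of $E$ (axiom (4) of Definition~\ref{d:Cartan}) then yields $a^*a = 0,$ whence $a = 0.$ This shows $\ker \Pi = (0)$ and completes the proof.

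I expect the crux to be the density of $Y$ in $X$: without it $\Pi$ need not be faithful (were $Y$ empty, $\Pi$ would be the zero representation). This density is precisely the content of essential principality for a second countable etale groupoid, so it is here that the standing hypothesis on $(\fA, \fM)$, inherited through the duality theory, does the real work; every other step is either a formal property of GNS representations or the faithfulness of $E.$
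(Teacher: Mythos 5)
Your proof is correct and takes essentially the same route as the paper: reduce to faithfulness (an injective \star homomorphism of C$^*$-algebras being isometric), then conclude from the density in $X$ of the set $Y$ of trivial-isotropy points (essential principality, via \cite{JRen08}) together with the faithfulness of the conditional expectation $E.$ The only cosmetic difference is that you obtain $E(a^*a)(y)=0$ directly from the GNS vector for each $y \in Y,$ whereas the paper argues by contradiction with a positive element and invokes Corollary~\ref{c:ker pi0 cap C(X)} to see that $E(a)$ vanishes on each pseudo-orbit; both reductions are immediate.
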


\begin{proof}

 We show that $\Pi$ is faithful by showing that any positive element in $\text{ker}(\Pi)$ is $0.$

By way of contradiction, we assume that $a \in \text{ker}(\Pi)$ is positive and nonzero. 
Since $E$ is a faithful conditional expectation, $E(a)$ is positive and nonzero.  By Corollary~\ref{c:ker pi0 cap C(X)},
$E(a)$ is positive element in $\text{ker}(\Pi) \cap C(X).$ By the same Corollary and the definition of $\Pi, \ E(a)$ vanishes on the pseudo-orbit $\sG(y)$
for all $y \in Y.$ But by \cite{JRen08}, Proposition 3.1, the set of points $Y$ with trivial isotropy is dense in $X.$ Hence, $E(a)$ vanishes on a dense subset,
and thus is identically zero.  It follows that $a = 0,$ and therefore that $\Pi$ is faithful.

\end{proof}

One of the earliest results in the theory of C$^*$-algebras was that there are 'sufficiently many' irreducible representations. Here we prove an analogue
for Dirichlet algebras. 

\begin{corollary} \label{c:sufficiently many nest}
Let $\sA, \fM, \fA$ be as in Theorem~\ref{t:Dirichlet}, and $Y$ and $\pi_y$ as in Lemma~\ref{l:sufficiently many}.
Then the Dirichlet algebra $\sA$ has sufficiently many nest representations. That is, for all $y \in Y, $ the restriction to $\sA$ of the representation $\pi_y$
is a nest representation of $\sA,$ and
\[ ||a|| = \sup_{y \in Y} ||\pi_y(a)|| \text{ for all } y \in \sA.\]

\end{corollary}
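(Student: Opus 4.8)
The plan is to assemble the two assertions of the corollary—the nest property of each $\pi_y|_{\sA}$ and the norm equality—from results already in hand, since the substantive work has been carried out in Theorem~\ref{t:Dirichlet} and Lemma~\ref{l:sufficiently many}. The corollary is essentially a synthesis.

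First I would dispatch the nest-representation claim. By the definition of $Y$ in Lemma~\ref{l:sufficiently many}, every $y \in Y$ is a point of trivial isotropy. Hence the construction of $\pi_0$ carried out at the start of section~\ref{s:MASAS and reps}, together with Corollary~\ref{c:fM is a masa}, applies verbatim with $x_0 = y$: the representation $\pi_y$ is irreducible and the weak operator closure of $\pi_y(\fM)$ is a discrete masa in $\sB(H_y)$. In particular $\ol{\pi_y(\sA)}^{\text{wk}}$ contains a masa, so the hypotheses of Theorem~\ref{t:Dirichlet} are met for each such $y,$ and the theorem yields that $\pi_y|_{\sA}$ is a nest representation.

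Next I would establish the norm formula. By Lemma~\ref{l:sufficiently many}, the direct sum $\Pi = \bigoplus_{y\in Y}\pi_y$ is a faithful $\star$homomorphism of the C$^*$-algebra $\fA,$ hence isometric. Since $\Pi(a)$ acts diagonally on $\bigoplus_{y\in Y} H_y,$ its operator norm is the supremum of the norms of its components, so that
\[ ||a|| = ||\Pi(a)|| = \sup_{y\in Y}||\pi_y(a)|| \quad \text{for every } a \in \fA. \]
Restricting to $a \in \sA$ and using that the embedding $\sA \hookrightarrow \fA$ is completely isometric because $\fA$ is a C$^*$-cover of $\sA,$ the norm of $a$ as an element of $\sA$ coincides with its norm in $\fA.$ This gives the desired equality $||a|| = \sup_{y \in Y}||\pi_y(a)||$ for all $a \in \sA,$ completing the proof.

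I do not anticipate a serious obstacle at this stage; the only points needing care are routine and standard: a faithful $\star$homomorphism of C$^*$-algebras is automatically isometric, and the norm of a diagonal operator on a Hilbert-space direct sum is the supremum of the fibrewise norms. The genuine content—that trivial-isotropy points are dense (via Proposition 3.1 of \cite{JRen08}, invoked inside Lemma~\ref{l:sufficiently many} to force $E(a)=0$), and that each associated irreducible representation restricts to a nest representation of $\sA$—has already been secured in the preceding results, so the corollary amounts to citing them in the right order.
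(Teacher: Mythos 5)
Your proof is correct and takes essentially the same route as the paper, whose own proof simply cites Theorem~\ref{t:Dirichlet} (applied at each trivial-isotropy point $y \in Y$ in place of $x_0$) together with Lemma~\ref{l:sufficiently many}. The details you supply---that the norm of the diagonal operator $\Pi(a)$ is the supremum of the fibrewise norms, and that the completely isometric embedding $\sA \hookrightarrow \fA$ lets you pass the norm equality from $\fA$ to $\sA$---are exactly the routine steps the paper leaves implicit.
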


\begin{proof}
This follows directly from Theorem~\ref{t:Dirichlet} and Lemma~\ref{l:sufficiently many}
\end{proof}


\section{Examples} \label{s:examples}

In this section we present several examples of non-self adjoint operator algebras that have been considered by various authors that we now know satisfy the conditions of
Corollary~\ref{c:sufficiently many nest}.

\begin{example} \label{e:TAF}
Let $\sA$ be a strongly maximal TAF algebra. (cf \cite{SPow92}) 
If $\fA$ is the inducitve limit of finite dimensional C$^*$-algebras $\fA_n,$ matrix units for $\fA_n$ are choosen so that each matrix unit of $\fA_n$ is a sum of matrix units
of $\fA_{n+1},$ and so that  $\sA_n := \sA \cap \fA_n $  embeds into $\sA_{n+1},$ and $\sA_n + \sA_n^* = \fA_n.$
 The diagonal $\sD_n := \sA_n \cap \sA_n^*$ of $\fA_n$ embeds into the diagonal $\sD_{n+1}$ of $\fA_{n+1}.$ 
The inductive limit $\sA$ of the $\sA_n$ is then a Dirichlet subalgebra of the AF algebra $\fA$ which contains the Cartan masa $\sD = \sA \cap \sA^*.$
Such masas $\sD$ were first constructed by Str\v{a}til\v{a} and Voiculescu in \cite{StrVoi75}.

As observed in \cite{JRen08}, all the ``privileged'' Cartan subalgebras of a given AF C$^*$algebra $\fA$ obtained by 
the Str\v{a}til\v{a}-Voiculescu construction are conjugate by an automorphism of the ambient
AF-algebra. However, it is not the case that the TAF algebras $\sA$ obtained by this construction are conjugate. Indeed, some may be simple Banach algebras, such as those
arising from the sequence of standard embeddings (e.g., \cite{Don93}), while others may have nontrivial radicals, such as those arising from nest embeddings.

\end{example}

\begin{example} \label{e:semicrossed product}
The semicrossed product $\sA$ associated with a compact metric space $X$ and a 
homeomorphism $\vpi$ is the closed subalgebra of the C$^*$-crossedproduct $\fA = C(X)\times_{\vpi}\bbZ$, which is generated by $C(X) $
 and nonnegative powers of the  canonical  unitary $U$.  Here $U$ satisfies $U^{-1}fU =f\circ \vpi, \ f \in C(X).$
The formal polynomials $\sum_{k \geq 0}  U^k f_k $ (with $f_k = 0$ except for finitely many $k$) are dense in the semicrossed product $\sA = C(X)\times_{\vpi}\bbZ^+.$

$C(X)$ is embedded in the crossed product as a Cartan subalgebra.
The Weyl groupoid $G$ associated with the pair $(\fA, C(X))$ consists of $\{ [x, \vpi^n, y]: x, y \in X, x = \vpi^n(y), \ n \in \bbZ \}.$  
The semicrossed product $\sA$ can be identified with functions supported on the semigroupoid $\{ [x, \vpi^n, y]: n \geq 0\}.$

If two dynamical systems $(X, \vpi),\ (X', \vpi')$ are conjugate, then it is not hard to show that there is an isomorphism of the respective crossed products,
which restricts to a (completely isometric) isomorphism of the two semicrossed products.  However, 
as noted in \cite{JRen08}, if two crossed products $\fA = C(X)\times_{\vpi} \bbZ$ and $\fA' = C(X')\times_{\vpi'}\bbZ$ are isomorphic, little can be deduced about
the dynamical systems $(X, \vpi)$ and $(X', \vpi'),$ even in the case of Cantor minimal systems. Indeed, the two systems need not even be flip conjugate.

On the other hand, if two  semicrossed  product  algebras $\sA,\ \sA'$ are isomorphic as \emph{algebras} then dynamical systems are conjugate. Of course this
implies that if $\sA, \ \sA'$ are isomorphic as operator algebras or as Banach algebras then the corresponding dynamical systems are conjugate.
This is the result of various authors, e.g., \cite{HadHoo88}, \cite{SPow92b}, \cite{DK.Con08}

\end{example}

\begin{example} \label{e:Cuntz}
A Cuntz  family is a set of $d \geq 2$ isometries $S_1, \dots, S_d$ in a Hilbert space $H$ satisfying
$\sum_{j=1}^d S_j S_j^* = I,$ and the C$^*$-algebra generated by the $S_j$ is Cuntz algebra $\sO_d.$
If $\mu, \  \nu$ are multi-indices of the form $i_1, \dots i_n, $  with $i_j \in \{1, \dots, d\}$ and $n \in \bbN,$
the subalgebra of finite sums of ``monomials'' of the form $S_{\mu}S_{\nu}^*$ is dense in $\sO_d.$
Elements of the form $S_{\mu} S_{\mu}^*$ generate a Cartan masa in $\sO_d.$

If $\mu = (i_1, \dots, i_n),$ let $|\mu| = n.$  Then we can define the subalgebra $\sA$ to be the norm closure in $\sO_d$
of linear combinations of elements of the form $S_{\mu}S_{\nu}^*$ with $|\mu| \geq |\nu|.$  Observe that if $|\mu| \geq |\nu|$ and $|\om| \geq |\eta|$ that
$(S_{\mu}S_{\nu}^*)(S_{\om}S_{\eta}^*)$ is either zero or has the form $S_{\al}S_{\be}^*$ with $|\al| - |\be| = |\mu| - |\nu| + |\om| - |\eta| \geq 0.$
Thus $\sA$ is a subalgebra, and the Dirichlet condition is clear. In this case $\sA \cap \sA^*$ is generated by $S_{\mu}S_{\nu}^*,\ |\mu| = |\nu|,$
(which is the UHF algebra $d^{\infty}$), so that $\sA \cap \sA^*$ properly contains the Cartan masa.

If $X = \{1, \dots, d\}^{\bbN}$ (with the product topology), then the groupoid $G = G(X, T)$ is associated with the one-sided shift $T: X \to X$:
\[ G = (x, m-n, y): x, y \in X, T^m(x) = T^n(y) \]
$G$ is essentially principal but not principal.
The Dirichlet algebra $\sA \subset \sO_d$ can be identified with functions supported on the ``sub-semigroupoid'' 
$P = \{(x, m-n, y) \in G: m - n \geq 0\}.$ (cf \cite{Dea95}, \cite{JRen08})

\end{example}

\begin{example} \label{e:graph}
There is a class of examples of Dirichlet subalgebras of graph C$^*$-algebras. We treat this very briefly, and refer the reader to \cite{IRae05} and \cite{HopPetPow05}
for more information. In this context, Cuntz isometries are replaced by Cuntz-Krieger partial isometries, satisfying the condition
\[ S_e^*S_e = \sum_{r(f) = s(e)} S_f S_f^* \]
where $e, f$ represent edges in the graph, such that the range of $e$ is the source of $f.$

The groupoid $G$ can be described as $\{ (x, m-n, y): T^m(x) = T^n(y)\},$ where $x, y$ infinte paths with range but no source, and $T$ is the one-sided shift.
In order for the functions supported on the unit space $G^{(0)}$ to be a masa imposes a condition on the graph, that that every loop have an entrance. (This condition
depends on the way composition of paths is defined; for some authors the condition becomes `every loop has an exit'.)

We define the Dirichlet subalgebra $\sA$ to be the subalgebra generated by the Cuntz-Krieger partial isometries $S_{\mu}S_{\nu}^*$ where $\mu, \nu$ are
multiindices with $|\mu| \geq |\nu|.$ To apply our results, we need to know that $X$ is compact, which implies a finiteness condition on the vertices.
\end{example}

\begin{example} \label{e:construction}
We sketch a procedure for generating a variety of examples. Suppose $(\fA, \fM)$ is a Cartan pair and $\sG$ is the Weyl pseudogroup.  Suppose $\sS \subset \sG$
is a subset with the following properties:
\begin{enumerate}
\item $\sS$ is closed under partial composition \\
\item $\sS \cap \sS^{-1} = \sG^{0}$ \\
\item $\sS \cup \sS^{-1} = \sG$
\end{enumerate}
where $\sG^{0}$ is the set of partial identities in $\sG,$ and $\sS^{-1}$ is the set of partial inverses in $\sS.$
Then we can define a Dirichlet algebra $\sA$ to be the algebra generated by those normalizers $n$ such that $\al_n \in \sS.$
The algebra $\sA^*$ is generated by normalizers $m$ so that $\al_{m^*} = \al_m^{-1} \in \sS,$ so that $\sA + \sA^*$ is dense in the C$^*$-algebra $\fA.$

The algebras $\sA$ constructed in this manner are Dirichlet in the strong sense, that $\sA \cap \sA^* = \fM$ and furthermore the conditional expectation
$E: \fA \to \fM$ is multiplicative on $\sA.$

Note that Examples~\ref{e:TAF} and \ref{e:semicrossed product} are examples of this type. On the other hand, observe that in  Examples~\ref{e:Cuntz} and \ref{e:graph}
the conditional expectation $E$ is not mutiplicative on the Dirichlet algebra.
\end{example}


\end{document}